\documentclass{amsproc}
%%%%%%%%%%%%%%%%%%%%%%%%%%%%%%%%%%%%%%%%%%%%%%%%%%%%%%%%%%%%%%%%%%%%%%%%%%%%%%%%%%%%%%%%%%%%%%%%%%%%%%%%%%%%%%%%%%%%%%%%%%%%%%%%%%%%%%%%%%%%%%%%%%%%%%%%%%%%%%%%%%%%%%%%%%%%%%%%%%%%%%%%%%%%%%%%%%%%%%%%%%%%%%%%%%%%%%%%%%%%%%%%%%%%%%%%%%%%%%%%%%%%%%%%%%%%
\usepackage{amsfonts}

\setcounter{MaxMatrixCols}{10}
%TCIDATA{OutputFilter=LATEX.DLL}
%TCIDATA{Version=5.50.0.2890}
%TCIDATA{<META NAME="SaveForMode" CONTENT="1">}
%TCIDATA{BibliographyScheme=Manual}
%TCIDATA{Created=Friday, February 15, 2013 10:58:43}
%TCIDATA{LastRevised=Friday, November 23, 2018 14:19:37}
%TCIDATA{<META NAME="GraphicsSave" CONTENT="32">}
%TCIDATA{<META NAME="DocumentShell" CONTENT="Articles\SW\AMS Journal Article">}
%TCIDATA{Language=American English}
%TCIDATA{CSTFile=amsartci.cst}

\newtheorem{theorem}{\bf Theorem}[section]
\theoremstyle{plain}

\newtheorem{corollary}{\bf Corollary}[section]

\newtheorem{definition}{\bf Definition}[section]
\newtheorem{example}{\bf Example}[section]

\newtheorem{lemma}{\bf Lemma}[section]

\newtheorem{proposition}{\bf Proposition}[section]

\newtheorem{rem}{\bf Remark}[section]
\numberwithin{equation}{section}

\begin{document}
\title[Warped product pseudo-slant submanifolds of nearly Kenmotsu $f$%
-manifolds]{Generalized inequalities of warped product submanifolds of nearly Kenmotsu $f$-manifolds}
\author{Yavuz Selim Balkan, Aliya Naaz Siddiqui and Akram Ali}
\address[ Yavuz Selim Balkan ]{ Department\ of\ Mathematics, Faculty\ of\
Art\ and\ Sciences, Duzce University, 81620, Duzce/TURKEY}
\email{y.selimbalkan@gmail.com}
\address[ Aliya Naaz Siddiqui ]{ Department of Mathematics, Jamia Millia
Islamia, New Delhi-110 025, India,}
\email{aliyanaazsiddiqui9@gmail.com}
\address[ Akram Ali ]{ Department of mathematics, King Khalid University,
Abha Saudi Arabia,}
\email{akramali133@gmail.com}
\subjclass[2000]{53D10, 53C15, 53C25, 53C35.}
\keywords{Kenmotsu $f$-manifold, Nearly Kenmotsu $f$-manifold, Warped
product submanifold, Pseudo slant submanifold.}

\begin{abstract}
In the present paper, we discuss the non-trivial warped product pseudo slant
submanifolds of type $M_{\bot }\times _{f}M_{\theta }$ and $M_{\theta
}\times _{f}M_{\bot }~$of nearly Kenmotsu $f$-manifold $\overline{M}$.
Firstly, we get some basic properties of these type warped product
submanifolds. Then, we establish the general sharp inequalities for squared
norm of second fundamental form for mixed totally geodesic warped product
pseudo slant submanifolds of both cases, in terms of the warping function and the slant
angle. Also the equality cases are verified. We show that some previous results are trivial from our results.
\end{abstract}

\maketitle

\section{Introduction}
The warped product which is a natural generalization of Riemannian product
was introduced to construct the manifolds with negative curvature by Bishop
and O'Neill in 1969 \cite{Bishop}. Then Kenmotsu introduced a remarkable
class of almost contact manifolds with negative curvature $-1~$by using
warped product \cite{Kenmotsu}. In 1978, Bejancu studied a special class of K%
\"{a}hler manifolds and defined the $CR$-submanifolds \cite{Bejancu}. Later on, in \cite%
{Chen}
Chen studied $CR$-submanifolds and introduced $CR$-warped product
submanifold in a K\"{a}hler manifold by using these two notions. He established a general inequality for the second fundamental form
in terms of warping functions for an arbitrary $CR$-warped product in an
arbitrary K\"{a}hler manifold. After that many authors derived the geometric inequalities of warped product
submanifolds in different ambient spaces (\cite{Ali1}-\cite{Ali}, \cite%
{Uddin}, \cite{Uddin1}, \cite{Uddin2}). Recently, \c{S}ahin \cite{Sahin}
constructed a general inequality for warped product pseudo slant
isometrically immersed in a K\"{a}hler manifold.

On the other hand, Yano defined and studied the $\left( 2n+s\right) $%
-dimensional globally framed metric $f$-manifold which is a natural
generalization complex manifolds and contact manifolds \cite{Yano}. Then in
1964, Ishihara and Yano investigate the integrability of the structures
defined on these manifolds \cite{Ishihara1}. Blair introduced three classes
of globally framed metric $f$-manifold as $K$-manifolds,~$S$-manifolds and $%
C $-manifolds \cite{Blair}. Moreover Falcitelli and Pastore defined almost
Kenmotsu $f$-manifold which is a generalization of an almost Kenmotsu
manifold \cite{Falcitelli}. \"{O}zt\"{u}rk et. al generalized the almost $C$%
-manifolds and almost Kenmotsu $f$-manifold and they introduced almost $%
\alpha $-cosymplectic $f$-manifolds \cite{Ozturk}. Recently, Balkan studied
a globally framed version of nearly Kenmotsu manifolds and obtained the
fundamental properties of these type manifolds \cite{Balkan1}.

The main objective of this paper to consider nearly Kenmotsu $f$%
-manifolds and compute some geometric sharp inequalities of non-trivial
warped product pseudo slant submanifolds. We prove the existence of the  warped product pseudo slant submanifolds in
nearly Kenmotsu $f$-manifolds by
constructing some examples. It well known that the warped product pseudo slant
submanifolds are natural extensions of $CR$-warped product submanifold with some geometric condition.

\section{Preliminaries}

Let $\overline{M}$ be $\left( 2n+s\right) $-dimensional manifold and $%
\varphi $ is a non-null $\left( 1,~1\right) $ tensor field on $M$. If $%
\varphi $ satisfies
\begin{equation}
\varphi ^{3}+\varphi =0,  \label{1}
\end{equation}%
then $\varphi $ is called an $f$-structure and $\overline{M}$ is called an $%
f $-manifold \cite{Yano}. If $rank\varphi =2n$ or $rank\varphi =2n+1$, i.e.,
$s=0$ or $s=1,~$then $\varphi $ is called an almost complex structure or an
almost contact structure, respectively \cite{Goldberg1}. On the other hand, $%
rank\varphi $ is always constant \cite{Stong}.

On an $f$-manifold $\overline{M}$, $P_{1}$ and $P_{2}$ operators are defined
by
\begin{equation}
\begin{array}{cc}
P_{1}=-\varphi ^{2}, & P_{2}=\varphi ^{2}+I,%
\end{array}
\label{2}
\end{equation}%
which satisfy
\begin{equation}
\begin{array}{ccc}
P_{1}+P_{2}=I, & P_{1}^{2}=P_{1}, & P_{2}^{2}=P_{2}, \\
\varphi P_{1}=P_{1}\varphi =\varphi , & P_{2}\varphi =\varphi P_{2}=0. &
\end{array}
\label{3}
\end{equation}%
These properties show that $P_{1}$ and $P_{2}$ are complement projection
operators. There are $D$~and $D^{\bot }$ distributions with respect to $%
P_{1} $ and $P_{2}$ operators, respectively \cite{Yano1}. Moreover $\dim
\left( D\right) =2m$ and $\dim \left( D^{\bot }\right) =s.$

Let $\overline{M}$ be $\left( 2m+s\right) $-dimensional $f$-manifold and $%
\varphi $ is a $\left( 1,~1\right) $ tensor field, $\xi _{i}$ is vector
field and $\eta ^{i}$ is $1$-form for each $1\leq i\leq s$ on $M,$
respectively. If the following properties are satisfied
\begin{equation}
\eta ^{j}\left( \xi _{i}\right) =\delta _{i}^{j},  \label{4}
\end{equation}%
and
\begin{equation}
\varphi^{2}=-I+\sum\limits_{i=1}^{s}\eta ^{i}\otimes \xi _{i},  \label{5}
\end{equation}%
then $\left( \varphi ,~\xi _{i},~\eta ^{i}\right) $ is called globally
framed $f$-structure or simply framed $f$-structure and $\overline{M}$ is
called globally framed $f$-manifold or simply framed $f$-manifold \cite%
{Nakagawa}. For a framed $f$-manifold $\overline{M},$\ the following
properties hold \cite{Nakagawa}:%
\begin{equation}
\varphi \xi _{i}=0,  \label{6}
\end{equation}%
\begin{equation}
\eta ^{i}\circ \varphi =0.  \label{7}
\end{equation}%
On a framed $f$-manifold $\overline{M}$ if there exists a Riemannian metric
which satisfies
\begin{equation}
\eta ^{i}\left( X\right) =g\left( X,~\xi _{i}\right) ,  \label{8}
\end{equation}%
and
\begin{equation}
g\left( \varphi X,~\varphi Y\right) =g\left( X,~Y\right)
-\sum\limits_{i=1}^{s}\eta ^{i}\left( X\right) \eta ^{i}\left( Y\right) ,
\label{0}
\end{equation}%
for all vector fields $X,~Y$ on $\overline{M},$ then $\overline{M}~$is
called a framed metric $f$-manifold \cite{Goldberg}. On a framed metric $f$%
-manifold, the fundamental $2$-form $\Phi $ is defined by
\begin{equation}
\Phi \left( X,~Y\right) =g\left( X,~\varphi Y\right) ,  \label{9}
\end{equation}%
for all vector fields $X,~Y$ on $\overline{M}$ \cite{Goldberg}. For a framed
metric $f$-manifold, if the following holds%
\begin{equation}
N_{\varphi }+2\sum\limits_{i=1}^{s}d\eta ^{i}\otimes \xi _{i}=0,  \label{10}
\end{equation}%
then $\overline{M}$ is called normal framed metric $f$-manifold, where $%
N_{\varphi }$ denotes the Nijenhuis torsion tensor of $\varphi $ \cite%
{Ishihara}.

A globally framed metric $f$-manifold $\overline{M}$ is called Kenmotsu $f$%
-manifold if it satisfies
\begin{equation}
\left( \overline{\nabla }_{X}\varphi \right) Y=\sum_{k=1}^{s}\left\{ g\left(
\varphi X,~Y\right) \xi _{k}-\eta ^{k}\left( Y\right) \varphi X\right\} ,
\label{00}
\end{equation}%
for all vector fields $X,~Y$ on $\overline{M}$ \cite{Ozturk}. Furthermore,
if a globally framed metric $f$-manifold $\overline{M}$ satisfies
\begin{equation}
\left( \overline{\nabla }_{X}\varphi \right) Y+\left( \overline{\nabla }%
_{Y}\varphi \right) X=-\sum_{k=1}^{s}\left\{ \eta ^{k}\left( X\right)
\varphi Y+\eta ^{k}\left( Y\right) \varphi X\right\}  \label{11}
\end{equation}%
then it is called a nearly Kenmotsu $f$-manifold. It is easily seen that
every Kenmotsu $f$-manifold is a nearly Kenmotsu $f$-manifold, but the
converse is not true. When a nearly Kenmotsu $f$-manifold $\overline{M}$ is
normal, it turns to a Kenmotsu $f$-manifold \cite{Balkan1}. On a nearly
Kenmotsu $f$-manifold $\overline{M},$ the following identities hold:
\begin{equation}
\overline{\nabla }_{X}\xi _{i}=-\varphi ^{2}X,  \label{000}
\end{equation}%
\begin{equation}
R\left( \xi _{i},~X\right) Y=\sum_{k=1}^{s}\left\{ -g\left( X,~Y\right) \xi
_{k}+\eta ^{k}\left( Y\right) X\right\} ,  \label{12}
\end{equation}%
\begin{equation}
R\left( X,~Y\right) \xi _{i}=\sum_{k=1}^{s}\left\{ \eta ^{k}\left( X\right)
Y-\eta ^{k}\left( Y\right) X\right\} ,  \label{13}
\end{equation}%
\begin{equation}
S\left( \varphi X,~\varphi Y\right) =S\left( X,~Y\right) +\left(
2n+s-1\right) \sum_{k=1}^{s}\eta ^{k}\left( X\right) \eta ^{k}\left(
Y\right) ,  \label{14}
\end{equation}%
\begin{equation}
\left( \overline{\nabla }_{X}\eta ^{i}\right) Y=g\left( X,~Y\right)
-\sum_{k=1}^{s}\eta ^{k}\left( X\right) \eta ^{k}\left( Y\right) ,
\label{15}
\end{equation}%
\begin{equation}
\sum_{k=1}^{s}\eta ^{k}\left( R\left( X,~Y\right) Z\right)
=\sum_{k=1}^{s}\left\{ g\left( X,~Z\right) \eta ^{k}\left( Y\right) -g\left(
Y,~Z\right) \eta ^{k}\left( X\right) \right\} ,  \label{16}
\end{equation}%
for any vector fields $X,~Y$ on $\overline{M}$ \cite{Balkan1}. Now we give the following example for nearly Kenmotsu $f-$manifolds.

\begin{example}
	Let us consider a $14$-dimensional manifold
	
	\begin{equation*}
		\overline{M}=\left\{ \left( x_{1},~\ldots ,~x_{6},~y_{1},~\ldots
		,~y_{6},~z_{1},~z_{2}\right) \in
		%TCIMACRO{\U{211d} }%
		%BeginExpansion
		\mathbb{R}
		%EndExpansion
		^{14}:~z_{1},~z_{2}\neq 0\right\},
	\end{equation*}%
	where $\left( x_{1},~\ldots ,~x_{6},~y_{1},~\ldots
	,~y_{6},~z_{1},~z_{2}\right) $ are standard coordinates in $%
	%TCIMACRO{\U{211d} }%
	%BeginExpansion
	\mathbb{R}
	%EndExpansion
	^{14}.$ We set the vector fields
	\begin{equation*}
		\begin{array}{ccc}
			e_{1}=e^{-\left( z_{1}+z_{2}\right) }\dfrac{\partial }{\partial x_{1}},~ &
			e_{2}=e^{-\left( z_{1}+z_{2}\right) }\dfrac{\partial }{\partial x_{2}},~ &
			e_{3}=e^{-\left( z_{1}+z_{2}\right) }\dfrac{\partial }{\partial x_{3}}%
			,~\bigskip \\
			e_{4}=e^{-\left( z_{1}+z_{2}\right) }\dfrac{\partial }{\partial x_{4}},~ &
			e_{5}=e^{-\left( z_{1}+z_{2}\right) }\dfrac{\partial }{\partial x_{5}},~ &
			e_{6}=e^{-\left( z_{1}+z_{2}\right) }\dfrac{\partial }{\partial x_{6}}%
			,~\bigskip \\
			e_{7}=e^{-\left( z_{1}+z_{2}\right) }\dfrac{\partial }{\partial y_{1}},~ &
			e_{8}=e^{-\left( z_{1}+z_{2}\right) }\dfrac{\partial }{\partial y_{2}},~ &
			e_{9}=e^{-\left( z_{1}+z_{2}\right) }\dfrac{\partial }{\partial y_{3}}%
			,~\bigskip \\
			e_{10}=e^{-\left( z_{1}+z_{2}\right) }\dfrac{\partial }{\partial y_{4}},~ &
			e_{11}=e^{-\left( z_{1}+z_{2}\right) }\dfrac{\partial }{\partial y_{5}},~ &
			e_{12}=e^{-\left( z_{1}+z_{2}\right) }\dfrac{\partial }{\partial y_{6}}%
			,~\bigskip \\
			e_{13}=\dfrac{\partial }{\partial z_{1}},~ & e_{14}=\dfrac{\partial }{%
				\partial z_{1}} ,&
		\end{array}%
	\end{equation*}%
	which are linearly independent at any point of $\overline{M}.$ Let $g$ be
	the Riemannian metric defined by
	\begin{equation*}
		g=e^{2\left( z_{1}+z_{2}\right) }\left( \sum_{i=1}^{6}\left( dx_{i}\otimes
		dx_{i}+dy_{i}\otimes dy_{i}\right) +\eta _{1}\otimes \eta _{1}+\eta
		_{2}\otimes \eta _{2}\right)
	\end{equation*}%
	where $\eta _{1}$ and $\eta _{2}$ are $1$-forms defined by $\eta _{1}\left(
	X\right) =g\left( X,~e_{13}\right) $ and $\eta _{2}\left( X\right) =g\left(
	X,~e_{14}\right) $, respectively. Thus $\left( e_{1},~\ldots ,~e_{14}\right)
	$ is an orthornormal basis of $\overline{M}$. Thus we define the $\left(
	1,1\right) $-tensor field $\varphi $ as follows:
	\begin{equation*}
		\varphi \left( \sum_{i=1}^{6}\left( x_{i}\dfrac{\partial }{\partial x_{i}}%
		+y_{i}\dfrac{\partial }{\partial y_{i}}\right) +z_{1}\dfrac{\partial }{%
			\partial z_{1}}+z_{2}\dfrac{\partial }{\partial z_{2}}\right) =\varphi
		\left( \sum_{i=1}^{6}\left( x_{i}\dfrac{\partial }{\partial y_{i}}-y_{i}%
		\dfrac{\partial }{\partial x_{i}}\right) \right) .
	\end{equation*}%
	Then we have
	\begin{equation*}
		\begin{array}{ccc}
			\varphi e_{1}=e_{7},~ & \varphi e_{2}=e_{8},~ & \varphi e_{3}=e_{9}\bigskip
			\\
			\varphi e_{4}=e_{10} & \varphi e_{5}=e_{11} & \varphi e_{6}=e_{12}\bigskip
			\\
			\varphi e_{7}=-e_{1},~ & \varphi e_{8}=-e_{2},~ & \varphi
			e_{9}=-e_{3}\bigskip \\
			\varphi e_{10}=-e_{4} & \varphi e_{11}=-e_{5} & \varphi e_{12}=-e_{6}\bigskip
			\\
			\varphi e_{13}=0 & \varphi e_{14}=0. &
		\end{array}%
	\end{equation*}%
	From the linearity of $g$ and $\varphi $, it yields that
	\begin{eqnarray*}
		&&%
		\begin{array}{ccc}
			\eta _{1}\left( \xi _{1}\right) =1,~ & \eta _{2}\left( \xi _{2}\right) =1,~
			& \varphi ^{2}X=-X+\eta _{1}\left( X\right) \xi _{1}+\eta _{2}\left(
			X\right) \xi _{2}%
		\end{array}
		\\
		&&%
		\begin{array}{c}
			g\left( \varphi X,~\varphi Y\right) =g\left( X,~Y\right) -\eta _{1}\left(
			X\right) \eta _{1}\left( X\right) -\eta _{2}\left( X\right) \eta _{2}\left(
			X\right) .%
		\end{array}%
	\end{eqnarray*}%
	Hence $\left( \varphi ,~\xi _{i},~\eta ^{i},~\right) $ defines a globally
	framed metric $f$-structure on $\overline{M}.$ On the other hand, by the
	virtue of definition of $1$-forms $\eta _{1}$ and $\eta _{2}$ it is said
	that they are closed. In other words $d\eta _{1}=0$ and $d\eta _{2}=0.$
	Moreover, we get the fundamental $2$-form $\Phi $ on $\overline{M}~$as in
	the follwing form
	\begin{eqnarray*}
		\Phi \left( \sum_{i=1}^{6}\dfrac{\partial }{\partial x_{i}},~\sum_{i=1}^{6}%
		\dfrac{\partial }{\partial y_{i}}\right)=&g\left( \sum_{i=1}^{6}\dfrac{\partial }{\partial x_{i}}%
		,~\sum_{i=1}^{6}\varphi \dfrac{\partial }{\partial y_{i}}\right) \\
		=&g\left( \sum_{i=1}^{6}\dfrac{\partial }{\partial x_{i}},~-\sum_{i=1}^{6}%
		\dfrac{\partial }{\partial x_{i}}\right) \\
		=&e^{-2\left( z_{1}+z_{2}\right) }.
	\end{eqnarray*}%
	Then we have $\Phi =-e^{2\left( z_{1}+z_{2}\right) }\sum_{i=1}^{6}\left(
	dx_{i}\wedge dy_{i}\right) .$ From the exterior derivative of the
	fundamental $2$-form $\Phi ,$ then we derive
	\begin{equation*}
		d\Phi =-2e^{2\left( z_{1}+z_{2}\right) }\left( dz_{1}+dz_{2}\right) \wedge
		\sum_{i=1}^{6}\left( dx_{i}\wedge dy_{i}\right) =2\left( \eta _{1}+\eta
		_{2}\right) \wedge \Phi .
	\end{equation*}%
	Then the manifold $\overline{M}$ is called an almost Kenmotsu $f$-manifold.
	After some easy calculation, it is easy to prove that seen that  the manifold $\overline{M}$ is normal. Hence by fact
	that every Kenmotsu $f$-manifold is a nearly Kenmotsu $f$-manifold from \cite%
	{Balkan1}, then we arrive at $\overline{M}$ is a nearly Kenmotsu $f$%
	-manifold.
\end{example}

\begin{rem}\label{r21}
From \eqref{11}, it is clear that if $s=0$ then $\overline{M}$ is become nearly Kaehler manifold \cite{Uddin3}. If $s=1$ then the manifold $\overline{M}$ is called nearly Kenmotsu manifold \cite{Ali}.
\end{rem}

Now we recall some basic facts of submanifold from \cite{Chen0}. Let $M$ be
a submanifold immersed in $\overline{M}$. We also denote by $g$ the induced
metric on $M$. Let $TM$ be the Lie algebra of vector fields in $M$ and $%
T^{\bot }M$ the set of all vector fields normal to $M$. Denote by $\nabla $
and $\overline{\nabla }$ the Levi-Civita connections of $M$ and $\overline{M}%
,$ respectively. Then the Gauss and Weingarten formulas are given by
\begin{equation}
\overline{\nabla }_{X}Y=\nabla _{X}Y+h\left( X,~Y\right)  \label{a}
\end{equation}%
and
\begin{equation}
\overline{\nabla }_{X}V=-A_{V}X+\nabla _{X}^{\bot }V  \label{b}
\end{equation}%
respectively, for any vector fields $X,~Y$ on $\overline{M}$ and any $V\in
T^{\bot }M.$ Here, $\nabla ^{\bot }$ is normal connection in the normal
bundle, $h$ is second fundamental form of $M$ and $A_{V}$ is the Weingarten
endomorphism associated with $V$. On the other hand, there is a relation
between $A_{V}$ and $h$ such that
\begin{equation}
g\left( A_{V}X,~Y\right) =g\left( h\left( X,~Y\right) ,~V\right).  \label{c}
\end{equation}
The mean curvature vector $H$ is defined by $H=\dfrac{1}{m}traceh$, where $m$
is the dimension of $M$. $M$ is said to be minimal, totally geodesic and
totally umbilical if $H$ vanishes identically and $h=0$,
\begin{equation}
h\left( X,~Y\right) =g\left( X,~Y\right) H,  \label{d}
\end{equation}%
respectively. Furthermore, the second fundamental form $h$ satisfies
\begin{equation}
\left( \overline{\nabla }_{X}h\right) \left( Y,~Z\right) =\nabla _{X}^{\bot
}h\left( Y,~Z\right) -h\left( \nabla _{X}Y,~Z\right) -h\left( Y,~\nabla
_{X}Z\right) .  \label{e}
\end{equation}

\section{Submanifolds of Globally Framed Metric $f$-manifolds}

In this section, let us recall some basic properties of submanifolds of
globally framed metric $f$-manifolds from \cite{Balkan}.

\begin{definition}
Let $\overline{M}$ be a globally framed metric $f$-manifold and $M$ is a
submanifold of $\overline{M}.$ For any vector field $X$ on $M$ $,$ we can
write
\begin{equation}
\varphi X=TX+NX,  \label{f}
\end{equation}%
where $TX$ and $NX$ are called tangent and normal component of $\varphi X$,
respectively. Similarly, for each $V\in \Gamma \left( T^{\bot }M\right) $,
we have
\begin{equation}
\varphi V=tV+nV.  \label{17}
\end{equation}%
Here, $tV$ is tangent component and $nV$ is normal component of $\varphi V.$
\end{definition}

\begin{corollary}
Let $\overline{M}$ be a globally framed metric $f$-manifold and $M$ is a
submanifold of $\overline{M}.$ Then the following identities hold:
\begin{equation}
\begin{array}{cc}
T^{2}=-I+\sum\limits_{k=1}^{s}\eta ^{k}\otimes \xi _{k}-tN, & NT+nN=0,%
\end{array}
\label{18}
\end{equation}%
\begin{equation}
\begin{array}{cc}
Tt+tn=0, & Nt+n^{2}=-I,%
\end{array}
\label{19}
\end{equation}%
where $I$ denotes the identity transformation.
\end{corollary}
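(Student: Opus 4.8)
The plan is to apply the defining identity \eqref{5}, namely $\varphi^{2}=-I+\sum_{k=1}^{s}\eta ^{k}\otimes \xi _{k}$, separately to an arbitrary $X\in \Gamma(TM)$ and to an arbitrary $V\in \Gamma(T^{\bot}M)$, and then to split each resulting equation into its tangential and normal parts by means of the decompositions \eqref{f} and \eqref{17}.

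First I would take $X\in\Gamma(TM)$ and expand $\varphi^{2}X=\varphi(TX+NX)$. Since $TX$ is tangent to $M$, \eqref{f} gives $\varphi(TX)=T^{2}X+N(TX)$; since $NX$ is normal, \eqref{17} gives $\varphi(NX)=t(NX)+n(NX)$. Summing and comparing with $\varphi^{2}X=-X+\sum_{k}\eta^{k}(X)\xi_{k}$ --- where one uses that the structure vector fields $\xi_{k}$ are tangent to $M$, so the term $\sum_{k}\eta^{k}(X)\xi_{k}$ is purely tangential --- the tangential components yield $T^{2}X+tNX=-X+\sum_{k}\eta^{k}(X)\xi_{k}$, which is the first asserted identity $T^{2}=-I+\sum_{k}\eta^{k}\otimes\xi_{k}-tN$, while the normal components yield $NTX+nNX=0$, that is $NT+nN=0$.

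Next I would run the same argument with $V\in\Gamma(T^{\bot}M)$: by \eqref{17} and \eqref{f}, $\varphi^{2}V=\varphi(tV+nV)=\bigl(TtV+NtV\bigr)+\bigl(tnV+n^{2}V\bigr)$, while $\varphi^{2}V=-V+\sum_{k}\eta^{k}(V)\xi_{k}=-V$ because $\eta^{k}(V)=g(V,\xi_{k})=0$ for $V$ normal and $\xi_{k}$ tangent. Equating tangential parts gives $TtV+tnV=0$, i.e. $Tt+tn=0$, and equating normal parts gives $NtV+n^{2}V=-V$, i.e. $Nt+n^{2}=-I$.

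These four relations are purely algebraic consequences of \eqref{5}, so no Gauss--Weingarten or curvature information is needed. The only point demanding care --- and the implicit standing hypothesis of this section --- is that $\xi_{k}\in\Gamma(TM)$; this is precisely what makes the $\eta^{k}\otimes\xi_{k}$ contribution fall entirely into the tangential slot of the first identity and disappear from the second. I do not expect a genuine obstacle here: the ``hard part'' is merely bookkeeping the four (tangential, normal)$\times$(tangent input, normal input) cases carefully.
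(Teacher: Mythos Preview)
Your argument is correct and is exactly the standard derivation: apply $\varphi^{2}=-I+\sum_{k}\eta^{k}\otimes\xi_{k}$ to a tangent vector and to a normal vector, expand via \eqref{f} and \eqref{17}, and match tangential and normal components. The paper itself offers no proof of this corollary---it is simply recalled from \cite{Balkan} as a routine consequence of the definitions---so there is nothing further to compare.
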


\begin{proposition}
Let $\overline{M}$ be a globally framed metric $f$-manifold and $M$ is a
submanifold of $\overline{M}.$ Then, $T$ and $n$ are skew-symmetric tensor
fields.
\end{proposition}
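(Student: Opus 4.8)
The plan is to reduce everything to the single fact that the structure tensor $\varphi$ is skew-symmetric with respect to $g$ on the ambient manifold $\overline{M}$, and then to separate tangential and normal components using the orthogonal splitting $T\overline{M}|_{M}=TM\oplus T^{\bot}M$.

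First I would establish that $g(\varphi X,~Y)=-g(X,~\varphi Y)$ for all vector fields $X,~Y$ on $\overline{M}$. To see this, replace $X$ by $\varphi X$ in the compatibility identity \eqref{0}: the term $\sum_{i}\eta^{i}(\varphi X)\eta^{i}(Y)$ vanishes by \eqref{7}, and using \eqref{5} together with \eqref{8} and \eqref{7} (which give $g(\xi_{i},~\varphi Y)=\eta^{i}(\varphi Y)=0$) the left-hand side collapses to $-g(X,~\varphi Y)$, while the right-hand side is $g(\varphi X,~Y)$. Equivalently, this is just the statement that the fundamental $2$-form $\Phi$ of \eqref{9} is genuinely antisymmetric.

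Next, for $X,~Y\in\Gamma(TM)$ I would write $\varphi X=TX+NX$ and $\varphi Y=TY+NY$ as in \eqref{f}. Since $NX,~NY\in\Gamma(T^{\bot}M)$ are orthogonal to $TM$, one gets $g(TX,~Y)=g(\varphi X,~Y)$ and $g(X,~\varphi Y)=g(X,~TY)$; combining these with the skew-symmetry of $\varphi$ yields $g(TX,~Y)=-g(X,~TY)$, i.e. $T$ is skew-symmetric. The argument for $n$ is entirely parallel: for $U,~V\in\Gamma(T^{\bot}M)$ decompose $\varphi U=tU+nU$ and $\varphi V=tV+nV$ via \eqref{17}; since $tU,~tV\in\Gamma(TM)$ are orthogonal to $T^{\bot}M$, one obtains $g(nU,~V)=g(\varphi U,~V)=-g(U,~\varphi V)=-g(U,~nV)$.

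There is no serious obstacle here; the only point requiring a short computation is the $g$-skew-symmetry of $\varphi$ on $\overline{M}$, and once that is in hand the statement follows purely from the orthogonality of $TM$ and $T^{\bot}M$. The one thing to state carefully is that $T$ and $n$ live on different bundles ($TM$ and $T^{\bot}M$, respectively), so ``skew-symmetric'' is meant with respect to the metric induced on each of them.
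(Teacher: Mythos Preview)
Your argument is correct. The paper itself does not supply a proof of this proposition; it is listed in Section~3 among ``basic properties of submanifolds of globally framed metric $f$-manifolds'' recalled from \cite{Balkan}. What you wrote is exactly the standard derivation: one first checks from \eqref{0}, \eqref{5}, \eqref{7}, \eqref{8} that $\varphi$ is $g$-skew on $\overline{M}$, and then the orthogonal splitting $T\overline{M}|_{M}=TM\oplus T^{\bot}M$ immediately forces the tangential part $T$ (on $TM$) and the normal part $n$ (on $T^{\bot}M$) to inherit that skew-symmetry. Your closing remark that $T$ and $n$ act on different bundles is also worth keeping.
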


\begin{proposition}
Let $\overline{M}$ be a globally framed metric $f$-manifold and $M$ is a
submanifold of $\overline{M}.$ Then, for any vector field $X$ on $M$ and $%
V\in \Gamma \left( T^{\bot }M\right) $, we have
\begin{equation}
g\left( NX,~V\right) =-g\left( X,~tV\right) ,  \label{20}
\end{equation}%
which gives the relation between $N$ and $t$.
\end{proposition}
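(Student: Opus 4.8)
The plan is to derive the identity $g(NX,V)=-g(X,tV)$ directly from the definitions in \eqref{f} and \eqref{17}, together with the compatibility property of the metric. The key observation is that the structure tensor $\varphi$ is skew-symmetric with respect to $g$ on the ambient manifold $\overline{M}$; this follows from the metric relation \eqref{0}, since replacing $Y$ by $\varphi Y$ in $g(\varphi X,\varphi Y)=g(X,Y)-\sum_i\eta^i(X)\eta^i(Y)$ and using \eqref{5}, \eqref{6}, \eqref{7} yields $g(\varphi X,Y)=-g(X,\varphi Y)$ for all vector fields $X,Y$ on $\overline{M}$.

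The main steps are as follows. First I would take a vector field $X$ tangent to $M$ and a normal vector field $V\in\Gamma(T^\perp M)$, and compute $g(\varphi X,V)$ in two ways. On one hand, using the decomposition $\varphi X=TX+NX$ from \eqref{f}, and observing that $TX$ is tangent to $M$ while $V$ is normal, orthogonality of $TM$ and $T^\perp M$ gives $g(\varphi X,V)=g(TX+NX,V)=g(NX,V)$. On the other hand, by skew-symmetry of $\varphi$ on $\overline{M}$ we have $g(\varphi X,V)=-g(X,\varphi V)$, and using the decomposition $\varphi V=tV+nV$ from \eqref{17}, where $tV$ is tangent and $nV$ is normal, orthogonality again gives $-g(X,\varphi V)=-g(X,tV+nV)=-g(X,tV)$. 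Equating the two expressions yields $g(NX,V)=-g(X,tV)$, which is \eqref{20}.

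The only real obstacle is establishing the skew-symmetry of $\varphi$ with respect to the ambient metric $g$; once that is in hand, the proposition is immediate from splitting $\varphi X$ and $\varphi V$ into tangential and normal parts and invoking orthogonality of $TM$ and $T^\perp M$. I would therefore present the skew-symmetry of $\varphi$ as a short preliminary computation (or simply cite it as a standard consequence of \eqref{0}), and then give the two-line argument above. No warped product or curvature machinery is needed here; this is purely a linear-algebraic consequence of the almost-contact-type metric compatibility.
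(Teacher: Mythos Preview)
Your argument is correct and is precisely the standard one: establish the skew-symmetry $g(\varphi X,Y)=-g(X,\varphi Y)$ from \eqref{0} together with \eqref{5}, \eqref{7}, \eqref{8}, and then split $\varphi X$ and $\varphi V$ into tangential and normal parts using \eqref{f} and \eqref{17}. The paper itself does not supply a proof of this proposition; it is listed among the basic facts recalled from \cite{Balkan} at the start of Section~3, so there is nothing further to compare.
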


\begin{proposition}
Let $\overline{M}$ be a globally framed metric $f$-manifold and $M$ is a
submanifold of $\overline{M}.$ Then, for any vector fields $X,$ $Y~$on $M$
and $V\in \Gamma \left( T^{\bot }M\right) ,$ the following identities hold:
\begin{equation}
\left( \overline{\nabla }_{X}\varphi \right) Y=\overline{\nabla }_{X}\varphi
Y-\varphi \overline{\nabla }_{X}Y  \label{g}
\end{equation}%
\begin{equation}
\left( \overline{\nabla }_{X}T\right) Y=\overline{\nabla }_{X}TY-T\overline{%
\nabla }_{X}Y,  \label{21}
\end{equation}%
\begin{equation}
\left( \overline{\nabla }_{X}N\right) Y=\overline{\nabla }_{X}^{\bot }NY-N%
\overline{\nabla }_{X}Y,  \label{22}
\end{equation}%
\begin{equation}
\left( \overline{\nabla }_{X}t\right) V=\overline{\nabla }_{X}tV-t\overline{%
\nabla }_{X}^{\bot }V,  \label{23}
\end{equation}%
\begin{equation}
\left( \overline{\nabla }_{X}n\right) V=\overline{\nabla }_{X}^{\bot }nV-n%
\overline{\nabla }_{X}^{\bot }V,  \label{24}
\end{equation}%
\begin{equation}
\left( \overline{\nabla }_{X}T\right) Y+\left( \overline{\nabla }%
_{Y}T\right) X=A_{NX}Y+A_{NY}X+2th\left( X,~Y\right) ,  \label{25}
\end{equation}%
\begin{equation}
\left( \overline{\nabla }_{X}N\right) Y+\left( \overline{\nabla }%
_{Y}N\right) X=2nh\left( X,~Y\right) -h\left( X,~TY\right) -h\left(
Y,~TX\right) ,  \label{26}
\end{equation}%
\begin{equation}
\left( \overline{\nabla }_{X}t\right) V=A_{nV}X-TA_{V}X,  \label{27}
\end{equation}%
\begin{equation}
\left( \overline{\nabla }_{X}n\right) V=-h\left( tV,~X\right) -NA_{V}X,
\label{28}
\end{equation}%
where $h$ is the second fundamental form, $\nabla $ is the Levi-Civita
connection and $A_{V}$ denotes the shape operator corresponding to the
normal vector field $V$.
\end{proposition}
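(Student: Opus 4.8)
The plan is to reduce every identity to the Leibniz rule for the ambient Levi-Civita connection, together with the Gauss and Weingarten formulas \eqref{a}, \eqref{b} and the tangential/normal decompositions \eqref{f}, \eqref{17}. Identity \eqref{g} is simply the definition of the covariant derivative of the $(1,1)$-tensor field $\varphi$, namely $(\overline{\nabla}_X\varphi)Y=\overline{\nabla}_X(\varphi Y)-\varphi(\overline{\nabla}_XY)$, so there is nothing to prove there. The four lines \eqref{21}--\eqref{24} are the same Leibniz rule written for the component fields coming from \eqref{f} and \eqref{17}: since $T,t$ take values in $TM$ while $N,n$ take values in $T^{\bot}M$, the induced derivative of each component is taken with $\nabla$ in the first case and with $\nabla^{\bot}$ in the second, which is exactly what those lines record. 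Hence the substantive content is \eqref{25}--\eqref{28}, which I would get by expanding \eqref{g} in components.

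For \eqref{25} and \eqref{26} I would apply \eqref{g} with $X,Y\in\Gamma(TM)$. On the left, substitute $\varphi Y=TY+NY$ and use \eqref{a} on $\overline{\nabla}_X(TY)$ and \eqref{b} on $\overline{\nabla}_X(NY)$; on the right, substitute $\overline{\nabla}_XY=\nabla_XY+h(X,Y)$ and split $\varphi$ by \eqref{f} on the tangential piece and by \eqref{17} on $h(X,Y)$, giving $T\nabla_XY+N\nabla_XY+th(X,Y)+nh(X,Y)$. Collecting the tangential terms produces $(\overline{\nabla}_XT)Y-A_{NY}X-th(X,Y)$ and collecting the normal terms produces $(\overline{\nabla}_XN)Y+h(X,TY)-nh(X,Y)$, where \eqref{21} and \eqref{22} are used to name the covariant derivatives. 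Writing the same line with $X$ and $Y$ interchanged, adding the two and using the symmetry of $h$, the tangential projection of the sum yields \eqref{25} and the normal projection yields \eqref{26}; the role of the symmetrisation is to kill the skew part of $\overline{\nabla}\varphi$, and what remains is cleared by the structure identities of Section~2 (in particular \eqref{11}) together with the skew-symmetry of $T$ and $n$ recorded in the Proposition above.

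For \eqref{27} and \eqref{28} the scheme is identical but with a normal field $V$ in the second slot of \eqref{g}. On the left, write $\varphi V=tV+nV$ and use \eqref{a} on $\overline{\nabla}_X(tV)$ and \eqref{b} on $\overline{\nabla}_X(nV)$; on the right, write $\overline{\nabla}_XV=-A_VX+\nabla^{\bot}_XV$ by \eqref{b} and then $\varphi A_VX=TA_VX+NA_VX$, $\varphi\nabla^{\bot}_XV=t\nabla^{\bot}_XV+n\nabla^{\bot}_XV$. Separating tangential and normal parts and invoking \eqref{23}, \eqref{24} to recognise $(\overline{\nabla}_Xt)V$ and $(\overline{\nabla}_Xn)V$, the tangential part collapses to \eqref{27} and the normal part to \eqref{28}, the shape-operator terms being matched through \eqref{c} and \eqref{20}. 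The \emph{main obstacle} is purely bookkeeping: tangential and normal projections must be kept strictly separated at every step, one must stay consistent about which connection differentiates which component, and the signs in \eqref{c} and \eqref{20} have to be tracked so that the combinations $A_{NX}Y+A_{NY}X+2th(X,Y)$ and $A_{nV}X-TA_VX$ come out with exactly the stated signs; beyond that there is no conceptual difficulty.
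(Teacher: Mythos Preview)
The paper does not actually prove this proposition; Section~3 opens by saying that these facts are \emph{recalled} from \cite{Balkan}, and no argument is supplied. Your scheme --- expand $(\overline{\nabla}_X\varphi)$ via Gauss--Weingarten and the decompositions \eqref{f}, \eqref{17}, then project onto $TM$ and $T^{\bot}M$ --- is exactly the standard derivation for such identities, so on method you are in line with what the source reference presumably does.

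There is, however, a genuine gap in your treatment of \eqref{25}--\eqref{28}. The proposition is stated for a \emph{general} globally framed metric $f$-manifold, so the nearly Kenmotsu identity \eqref{11} is not available and you are not entitled to invoke it. If you carry your own computation through, the tangential projection of $(\overline{\nabla}_X\varphi)Y+(\overline{\nabla}_Y\varphi)X$ is
\[
(\overline{\nabla}_XT)Y+(\overline{\nabla}_YT)X \;-\; A_{NY}X - A_{NX}Y \;-\; 2\,th(X,Y),
\]
and nothing in the hypotheses forces this symmetric part of $\overline{\nabla}\varphi$ to vanish; even if you imposed \eqref{11} by hand it would leave the residue $-\sum_k\{\eta^k(X)TY+\eta^k(Y)TX\}$, not zero. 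Likewise for \eqref{27}--\eqref{28}: your projection gives $(\overline{\nabla}_Xt)V=A_{nV}X-TA_VX$ plus the tangential part of $(\overline{\nabla}_X\varphi)V$, and $(\overline{\nabla}_Xn)V=-h(tV,X)-NA_VX$ plus the normal part, and there is no mechanism here to kill those extra pieces. In short, the printed identities appear to have dropped the $(\overline{\nabla}_X\varphi)$ contributions from one side; this is a defect of the statement (as recalled), not of your method. Your appeal to \eqref{11} to ``clear what remains'' is precisely the symptom: you needed an assumption the proposition does not grant, and even that assumption is not enough.
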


\begin{definition}
Let $\overline{M}$ be a globally framed metric $f$-manifold and $M$ is a
submanifold of $\overline{M}.$ Then the $TM$ tangent bundle of $M$ can be
decomposed as
\begin{equation}
TM=\sum\limits_{k=1}^{s}D_{\theta }\oplus \xi _{k},  \label{29}
\end{equation}%
where for each $1\leq k\leq s$ the $\xi _{k}$ denotes the distributions
spanned by the structure vector fields $\xi _{k}$ and $D_{\theta }$ is
complementary of distributions $\xi _{k}$ in $TM$, known as the slant
distribution on $M.$
\end{definition}

\begin{theorem}
Let $\overline{M}$ be a globally framed metric $f$-manifold and $M$ is a
submanifold of $\overline{M}.$ Then $M$ is a slant submanifold if and only
if there exists a constant $\mu \in \left[ 0,~1\right] $ such that
\begin{equation}
T^{2}=-\mu \left( I-\sum\limits_{k=1}^{s}\eta ^{k}\otimes \xi _{k}\right) .
\label{30}
\end{equation}%
Moreover, if $\theta $ is the slant angle of $M$, then $\mu =\cos ^{2}\theta
.$
\end{theorem}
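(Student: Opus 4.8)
The plan is to characterize slant submanifolds via the behavior of the tangent projection operator $T$ restricted to the slant distribution $D_\theta$. Recall that by Definition \ref{...} (the decomposition \eqref{29}), the tangent bundle splits as $TM = D_\theta \oplus \mathrm{span}\{\xi_1,\dots,\xi_s\}$, and since $\varphi\xi_i = 0$ by \eqref{6} we have $T\xi_i = 0$ for each $i$, so $T$ acts trivially on the $\xi$-part and it suffices to analyze $T$ on $D_\theta$. For a unit vector $X \in D_\theta$ at a point $p$, the slant angle $\theta(X)$ is by definition the angle between $\varphi X$ and $T_pM$ (equivalently between $\varphi X$ and $TX$). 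Using \eqref{f}, $\varphi X = TX + NX$ with $TX \in TM$ and $NX \perp TM$, so $\cos\theta(X) = |TX|/|\varphi X|$. Since $X \perp \xi_i$ for all $i$, \eqref{0} gives $g(\varphi X,\varphi X) = g(X,X) = 1$, hence $|\varphi X| = 1$ and $\cos\theta(X) = |TX|$, i.e. $\cos^2\theta(X) = g(TX,TX) = -g(T^2X,X)$, where the last equality uses that $T$ is skew-symmetric (the first Proposition in Section 3). This is the computational backbone.

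For the forward direction, suppose $M$ is slant, so $\theta(X) = \theta$ is independent of $X \in D_\theta$ and of $p$. Then for every unit $X \in D_\theta$ we have $g(T^2X,X) = -\cos^2\theta$, and by polarization (replacing $X$ by $X+Y$ and using bilinearity and skew-symmetry of $T$, hence symmetry of $X \mapsto -g(T^2X,X)$) we get $g(T^2X,Y) = -\cos^2\theta\, g(X,Y)$ for all $X,Y \in D_\theta$. Since $T^2 X \in D_\theta$ whenever $X \in D_\theta$ — which follows because $NT + nN = 0$ from \eqref{18} forces $TX \in D_\theta$... more directly, one checks $\varphi(D_\theta) \subset D_\theta \oplus T^\perp M$ and $T = P_{TM}\circ\varphi$ preserves $D_\theta$ — it follows that $T^2 = -\cos^2\theta\, \mathrm{Id}$ on $D_\theta$. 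Combining with $T^2\xi_i = 0$ and $-(I - \sum_k \eta^k\otimes\xi_k)$ being exactly the projection onto $D_\theta$ composed with $-\mathrm{Id}$, we obtain \eqref{30} with $\mu = \cos^2\theta$.

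For the converse, assume \eqref{30} holds for some constant $\mu \in [0,1]$. Then for any unit $X \in D_\theta$ we compute $\cos^2\theta(X) = -g(T^2X,X) = \mu\, g(X - \sum_k\eta^k(X)\xi_k, X) = \mu\, g(X,X) = \mu$, using $\eta^k(X) = g(X,\xi_k) = 0$. Thus $\cos\theta(X) = \sqrt{\mu}$ is the same constant for every unit $X \in D_\theta$ at every point, which is precisely the definition of $M$ being slant with slant angle $\theta = \arccos\sqrt{\mu}$, and in particular $\mu = \cos^2\theta$. The constancy over the whole manifold is automatic since $\mu$ was assumed to be a (global) constant.

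The main obstacle — really the only subtle point — is justifying that $T$ maps $D_\theta$ into itself, so that \eqref{30} (a statement about $T^2$ on all of $TM$) is genuinely equivalent to the angle condition (a statement about unit vectors in $D_\theta$ only). This uses the corollary identities \eqref{18}, specifically that $\varphi$ applied to a vector orthogonal to all $\xi_k$ lands in $D_\theta \oplus T^\perp M$ with no $\xi_k$-component, together with $\varphi\xi_i = 0$; once that is in place, everything else is the standard polarization argument of Chen--Tardini--type slant theory and is purely routine. I would present the proof in the two-directions format above, inserting the $D_\theta$-invariance of $T$ as a short preliminary observation before the polarization step.
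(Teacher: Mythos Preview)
The paper does not actually give a proof of this theorem: Section~3 opens by saying that the material is recalled from \cite{Balkan}, and the theorem is stated without any accompanying \texttt{proof} environment. So there is no ``paper's own proof'' to compare against.

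Your argument is the standard one and is correct. The one point you call the ``main obstacle''---that $T$ preserves $D_\theta$---can be settled more directly than via \eqref{18}: for $X\in D_\theta$ one has
\[
g(TX,\xi_k)=g(\varphi X,\xi_k)=-g(X,\varphi\xi_k)=0
\]
by \eqref{6} and the skew-symmetry of $\varphi$ (which follows from \eqref{9}), so $TX\perp\xi_k$ for all $k$. With this in hand, the polarization step and the converse computation go through exactly as you wrote. If you clean up that invariance remark, the proof is complete and in the standard Chen-style format one would expect for this kind of result.
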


\begin{corollary}
Let $M$ be a slant submanifold of a globally framed metric $f$-manifold $%
\overline{M}$ with slant angle $\theta .$ Then for any vector fields $X,$ $%
Y~ $on $M,$ we find
\begin{equation}
g\left( TX,~TY\right) =\cos ^{2}\theta \left\{ g\left( X,~Y\right)
-\sum\limits_{k=1}^{s}\eta ^{k}\left( X\right) \eta ^{k}\left( Y\right)
\right\}  \label{31}
\end{equation}%
and
\begin{equation}
g\left( NX,~NY\right) =\sin ^{2}\theta \left\{ g\left( X,~Y\right)
-\sum\limits_{k=1}^{s}\eta ^{k}\left( X\right) \eta ^{k}\left( Y\right)
\right\} .  \label{32}
\end{equation}
\end{corollary}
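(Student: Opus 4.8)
The plan is to derive both identities directly from the characterization of slant submanifolds in the preceding Theorem, namely $T^{2} = -\cos^{2}\theta\,(I - \sum_{k}\eta^{k}\otimes\xi_{k})$, together with the compatibility relation \eqref{0} for the metric $g$ and the skew-symmetry of $T$ established in an earlier Proposition. First I would prove \eqref{31}. Using \eqref{f} we have $\varphi X = TX + NX$ with $TX$ tangent and $NX$ normal, so $g(TX, TY) = g(\varphi X - NX, \varphi Y - NY)$; but a cleaner route is to write $g(TX, TY) = g(X, T^{2}Y)$ up to sign: since $T$ is skew-symmetric, $g(TX, TY) = -g(X, T(TY)) = -g(X, T^{2}Y)$. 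Wait — $T$ skew gives $g(TX, TY) = -g(T^2X, Y)$ as well; either way, substituting the slant relation \eqref{30} yields
\begin{equation*}
g(TX, TY) = -g\!\left(X,\, T^{2}Y\right) = \cos^{2}\theta\left\{g(X,Y) - \sum_{k=1}^{s}\eta^{k}(X)\,g(\xi_{k}, Y)\right\},
\end{equation*}
and then \eqref{8} replaces $g(\xi_{k}, Y)$ by $\eta^{k}(Y)$, giving exactly \eqref{31}.

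Next I would establish \eqref{32} by bootstrapping off \eqref{31}. Apply the metric compatibility \eqref{0} to the pair $\varphi X, \varphi Y$:
\begin{equation*}
g(\varphi X, \varphi Y) = g(X, Y) - \sum_{k=1}^{s}\eta^{k}(X)\,\eta^{k}(Y).
\end{equation*}
On the other hand, decomposing $\varphi X = TX + NX$ and $\varphi Y = TY + NY$ and using that $TX, TY$ are tangent while $NX, NY$ are normal (so the cross terms $g(TX, NY)$ and $g(NX, TY)$ vanish), we get $g(\varphi X, \varphi Y) = g(TX, TY) + g(NX, NY)$. Equating the two expressions and solving for $g(NX, NY)$:
\begin{equation*}
g(NX, NY) = g(X,Y) - \sum_{k=1}^{s}\eta^{k}(X)\eta^{k}(Y) - g(TX, TY),
\end{equation*}
and substituting \eqref{31} collapses the right-hand side to $(1 - \cos^{2}\theta)\{g(X,Y) - \sum_{k}\eta^{k}(X)\eta^{k}(Y)\} = \sin^{2}\theta\{g(X,Y) - \sum_{k}\eta^{k}(X)\eta^{k}(Y)\}$, which is \eqref{32}.

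Honestly, there is no real obstacle here — this is a routine bookkeeping argument. The only point that needs mild care is the sign in the skew-symmetry step (whether one writes $g(TX,TY) = g(X, T^2 Y)$ or $-g(X, T^2 Y)$) and making sure the $\eta^{k}\otimes\xi_{k}$ terms are handled consistently when moving between $g(\xi_{k}, Y)$ and $\eta^{k}(Y)$ via \eqref{8}; one should also note that on the slant distribution $D_{\theta}$ the $\eta^{k}$ terms drop out entirely, so the formulas are really the clean $g(TX,TY) = \cos^{2}\theta\, g(X,Y)$ and $g(NX,NY) = \sin^{2}\theta\, g(X,Y)$ for $X, Y \in \Gamma(D_{\theta})$, extended to all of $TM$ by \eqref{29}. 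I would present the computation for general $X, Y$ so that both identities hold as stated without restricting to $D_\theta$.
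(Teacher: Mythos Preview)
Your argument is correct. The paper does not actually supply a proof for this corollary: all of Section~3 is explicitly a list of results ``recalled'' from \cite{Balkan}, and this statement is just recorded as an immediate consequence of the characterization $T^{2}=-\cos^{2}\theta\,(I-\sum_{k}\eta^{k}\otimes\xi_{k})$ in the preceding theorem. Your derivation---skew-symmetry of $T$ to get $g(TX,TY)=-g(X,T^{2}Y)$, then substitution of \eqref{30} for \eqref{31}, followed by the decomposition $g(\varphi X,\varphi Y)=g(TX,TY)+g(NX,NY)$ together with \eqref{0} to obtain \eqref{32}---is exactly the standard one and is what any reader would supply. The only cosmetic point: the aside about the sign (``Wait --- \ldots'') and the closing commentary about restricting to $D_{\theta}$ should be trimmed in a final write-up, but mathematically there is nothing to fix.
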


\begin{definition}
Let $M$ be a submanifold of a globally framed metric $f$-manifold $\overline{%
M}$ and let $M$ be tangent to the structure vector fields $\xi _{k}$ for
each $1\leq k\leq s.$ For each nonzero vector $X$ tangent to $M$ at $p$,
we denote by $0\leq \theta \left( X\right) \leq \dfrac{\pi }{2}$, the angle
between $\varphi X$ and $T_{p}M$, known as the Wirtinger angle of $X$. If
the $\theta \left( X\right) $ is constant, that is, independent of the
choice of $p\in M$ and $X\in T_{p}M-\left\{ \xi _{k}\right\} ,$ for each $%
1\leq k\leq s,$ then $M$ is said to be a slant submanifold and the constant
angle $\theta $ is called slant angle of the slant submanifold
\end{definition}

Here, if $\theta =0,$ $M$ is invariant submanifold and if $\theta =\dfrac{%
\pi }{2}$, then $M$ is an anti-invariant submanifold. A slant submanifold is
proper slant if it is neither invariant nor anti-invariant submanifold.

\begin{definition}
Let $M$ be a submanifold of a a globally framed metric $f$-manifold $%
\overline{M}$ \ We say that $M$ is a pseudo-slant submanifold if there exist
two orthogonal distributions $D_{\theta }$ and $D^{\bot }$ such that

$1)$ The $TM$ tangent bundle of $M$ admits the orthogonal direct
decomposition $TM=D^{\bot }\oplus D_{\theta },$ where for each $1\leq k\leq
s $ $\xi _{k}\in \Gamma \left( D_{\theta }\right) .$

$2)$ The distribution $D^{\bot }$ is anti-invariant i. e., $\varphi \left(
D^{\bot }\right) \subset \left( T^{\bot }M\right) .$

$3)$ The distribution $D_{\theta }$ is slant with angle $\theta \neq \dfrac{%
\pi }{2},$ that is, the angle between $D_{\theta }$ and $\varphi \left(
D_{\theta }\right) $ is a constant.
\end{definition}

A pseudo-slant submanifold of a globally framed metric $f$-manifold is
called mixed totally geodesic if $h\left( X,~Z\right) =0$ for all $X\in
\Gamma \left( D^{\bot }\right) $ and $Z\in \Gamma \left( D_{\theta }\right)
. $ Now let $\left\{ e_{1},~\ldots ,~e_{n}\right\} $ be an orthonormal basis
of the tangent space $TM$ and $e_{r}$belongs to the orthonormal basis $%
\left\{ e_{n+1},~\ldots ,~e_{m}\right\} $ of a normal bundle $T^{\bot }M$,
then we define
\begin{equation}
\begin{array}{ccc}
h_{ij}^{r}=g\left( h\left( e_{i},~e_{j}\right) ,~e_{r}\right) & \text{and} &
\left\Vert h\right\Vert ^{2}=\sum_{i,~j=1}^{n}g\left( h\left(
e_{i},~e_{j}\right) ,~h\left( e_{i},~e_{j}\right) \right) .%
\end{array}
\label{33}
\end{equation}
On the other hand, for a differentiable function $\lambda $ on $M$, we have
\begin{equation}
\left\Vert \nabla \lambda \right\Vert ^{2}=\sum_{i=1}^{n}\left( e_{i}\left(
\lambda \right) \right) ^{2},  \label{34}
\end{equation}%
where the gradient $grad\lambda$ is defined by $g\left( \nabla
\lambda ,~X\right) =X\lambda $, for any vector field $X\in \Gamma \left(
TM\right) .$

\section{Warped Product Pseudo-Slant Submanifolds}

In this section, we investigate some fundamental properties of warped \
product pseudo-slant submanifolds of a nearly Kenmotsu $f$-manifold. First,
we recall the definition of warped product manifolds and provide the useful lemma from \cite{Bishop} which will use in the proof of our main results.

\begin{definition}
Let $\left( M_{1},~g_{M_{1}}\right) ~$and $\left( M_{2},~g_{M_{2}}\right) $
be two Riemannian manifolds with Riemannian metrics $g_{M_{1}}$ and $%
g_{M_{2}}$, respectively and $f$ is a positive differentiable function on $%
M_{1}.$ The warped product $M_{1}\times _{f}M_{2}$ of $M_{1}$ and $M_{2}$ is
the Riemannian manifold $\left( M_{1}\times M_{2},~g\right) $, where
\begin{equation*}
g=g_{M_{1}}\times f^{2}g_{M_{2}}.
\end{equation*}%
More explicitly , if $U$ is tangent to $M=M_{1}\times _{f}M_{2}$ at $\left(
p,~q\right) ,$ then
\begin{equation*}
\left\Vert U\right\Vert ^{2}=\left\Vert d_{\pi _{1}}U\right\Vert
^{2}+f^{2}\left\Vert d_{\pi _{2}}U\right\Vert ^{2}
\end{equation*}%
where for $i=1,~2$, $\pi _{i}$ are the canonical projections of $M_{1}\times
M_{2}$ on $M_{1}$ and $M_{2}$ respectively.
\end{definition}

\begin{lemma}\label{l41}
Let $M=M_{1}\times _{f}M_{2}$ be a warped product manifold. Then we have

$\left( i\right) ~\nabla _{X}Y\in \Gamma \left( TM_{1}\right) ,$

$\left( ii\right) ~\nabla _{Z}X=\nabla _{X}Z=\left( X\ln f\right) Z,$

$\left( iii\right) ~\nabla _{Z}W=\nabla _{Z}^{\circ }W-g\left( Z,~W\right)
\nabla \ln f,$

for all $X,~Y~$on $M_{1}$ and $Z,~W~$on $M_{2},$where $\nabla $ and $\nabla
^{\circ }$ denote the Levi-Civita connections on $M_{1}$ and $M_{2},$%
respectively. Moreover, $\nabla \ln f,$ the gradient of $\ln f$, is defined
by $g\left( \nabla \ln f,~U\right) =U\ln f.$ A warped product manifold $%
M=M_{1}\times _{f}M_{2}$ is trivial if the warping function $f$ is
constant.\ If $M=M_{1}\times _{f}M_{2}$ is a warped product manifold then it
is said to be that $M_{1}$ is totally geodesic and $M_{2}$ is totally
umbilical submanifold of $M$.
\end{lemma}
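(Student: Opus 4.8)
The plan is to obtain all three identities from the Koszul formula for the Levi-Civita connection $\nabla$ of the warped-product metric $g=g_{M_{1}}+f^{2}g_{M_{2}}$, combined with two structural facts about $M=M_{1}\times_{f}M_{2}$ (essentially as in \cite{Bishop}): lifts of vector fields on the two factors commute, so $[X,Z]=0$ whenever $X$ is tangent to $M_{1}$ and $Z$ is tangent to $M_{2}$; and the warping function $f$ is constant along each fibre, so $Zf=0$ for every $Z$ tangent to $M_{2}$. I will use freely that $g(X,Z)=0$ for $X$ on $M_{1}$ and $Z$ on $M_{2}$, and that $g$ restricts to $g_{M_{1}}$ on the base and to $f^{2}g_{M_{2}}$ on the fibres.

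For $(i)$, I would take $X,Y$ tangent to $M_{1}$ and an arbitrary $Z$ tangent to $M_{2}$ and expand $2g(\nabla_{X}Y,Z)$ by Koszul. All six terms vanish: the three metric-derivative terms because $g(Y,Z)=g(X,Z)=0$ and $g(X,Y)$ depends only on the $M_{1}$-variables while $Z$ differentiates only along the fibre, and the three bracket terms because $[X,Y]$ is tangent to $M_{1}$ and $[Y,Z]=[Z,X]=0$. Hence $\nabla_{X}Y\perp TM_{2}$, i.e. $\nabla_{X}Y\in\Gamma(TM_{1})$; repeating the computation with $Z$ replaced by a field on $M_{1}$ identifies its base component with the intrinsic connection of $(M_{1},g_{M_{1}})$. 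For $(ii)$, the equality $\nabla_{X}Z=\nabla_{Z}X$ is immediate from $[X,Z]=0$. Pairing $\nabla_{X}Z$ with a field $Y$ on $M_{1}$ and using $0=Xg(Z,Y)=g(\nabla_{X}Z,Y)+g(Z,\nabla_{X}Y)$ together with $(i)$ shows $\nabla_{X}Z$ is tangent to $M_{2}$; pairing it with a field $W$ on $M_{2}$ and applying Koszul, only $Xg(Z,W)=2f\,(Xf)\,g_{M_{2}}(Z,W)$ survives, which gives $2g(\nabla_{X}Z,W)=2(X\ln f)\,g(Z,W)$, hence $\nabla_{X}Z=(X\ln f)Z$.

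For $(iii)$, I would split $\nabla_{Z}W$ into base and fibre parts. Pairing with $X$ on $M_{1}$, the relation $0=Zg(W,X)=g(\nabla_{Z}W,X)+g(W,\nabla_{Z}X)$ and $(ii)$ give $g(\nabla_{Z}W,X)=-(X\ln f)\,g(Z,W)$, so the base component is $-g(Z,W)\,\nabla\ln f$ (which is indeed tangent to $M_{1}$). Pairing with $V$ on $M_{2}$ and applying Koszul, every term carries an overall factor $f^{2}$ that is constant along the fibre (so it passes through the fibre-tangent derivatives), and what remains is $f^{2}$ times the Koszul expression for the Levi-Civita connection of $(M_{2},g_{M_{2}})$; thus the fibre component of $\nabla_{Z}W$ is $\nabla^{\circ}_{Z}W$, and adding the two parts yields $(iii)$. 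The closing assertions then follow at once: if $f$ is constant then $\nabla\ln f=0$, so $(ii)$--$(iii)$ make $\nabla$ split and $M$ is a Riemannian product; $(i)$ says the second fundamental form of $M_{1}$ in $M$ vanishes, so $M_{1}$ is totally geodesic; and $(iii)$ gives $h(Z,W)=-g(Z,W)\,\nabla\ln f$ for the second fundamental form of $M_{2}$, which is proportional to $g$, so $M_{2}$ is totally umbilical.

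The calculations are short and essentially mechanical; the only care required is the bookkeeping of which Koszul terms survive in each of the three cases, and, in the fibre-direction step of $(iii)$, the observation that $f^{2}$ is annihilated by $M_{2}$-tangent vectors so that $\nabla^{\circ}$ may be taken to be the Levi-Civita connection of $g_{M_{2}}$ itself rather than of the rescaled metric $f^{2}g_{M_{2}}$. I do not expect a genuine obstacle here: the statement is the classical Bishop--O'Neill description of the connection of a warped product.
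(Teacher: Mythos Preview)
Your argument is correct and is the standard Koszul-formula derivation of the Bishop--O'Neill connection identities. Note, however, that the paper does not actually prove this lemma: it is merely \emph{recalled} from \cite{Bishop} as a known tool, so there is no in-paper proof to compare against. Your approach is essentially the one Bishop and O'Neill themselves use, so nothing further is needed.
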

Motivated from the definition of pseudo-slant submanifolds, we see that there are two types of warped product pseudo-slant submanifolds can be constructed. According to them, we have the following cases
\begin{equation}
(i)\,\,\, M_{\theta}\times _{f}M_{\bot }\,\,\,\,\text{and}\,\,\,(ii)\,\,\,M_{\bot }\times _{f}M_{\theta },
\end{equation}
where we consider the the
structure vector fields $\xi _{k}^{\prime }s$~are tangent to base manifolds in both cases.\\

Let us consider the first case of warped product pseudo-slant submanifold of type  $M_{\theta}\times _{f}M_{\bot }$ in a nearly Kenmotsu $f-$manifold. The following examples ensure the existence of warped product pseudo-slant submanifold as follows
\begin{example}
	It is well-known that $S^{6}$ inherit a nearly K\"{a}hler structure, which
	is not K\"{a}hlerian. Let us consider $\overline{M}=S^{6}\times _{f}R^{2}$
	and let $f\left( t_{1},~t_{2}\right) =e^{t_{1}+t_{2}}$. Then it is said that
	$\overline{M}$ is a nearly Kenmotsu $f$ -manifold which is not Kenmotsu $f$
	-manifold with warping function $f.$ On the other hand, $\dim \overline{M}=8$
	where $s=2.$ Assume that the coordinates of $M$ are as following
	\begin{equation*}
		\left\{ x_{1},~y_{1},~x_{2},~y_{2},~x_{3},~y_{3},~t_{1},~t_{2}\right\}
	\end{equation*}%
	and for $1\leq i,~j\leq 3,~k=1,~2,$ the following equations are satisfied
	\begin{equation*}
		\begin{array}{ccc}
			\varphi \left( \dfrac{\partial }{\partial x_{i}}\right) =-\dfrac{\partial }{%
				\partial y_{i}},~ & \varphi \left( \dfrac{\partial }{\partial y_{i}}\right) =%
			\dfrac{\partial }{\partial x_{i}},~ & \varphi \left( \dfrac{\partial }{%
				\partial t_{k}}\right) =0.%
		\end{array}%
	\end{equation*}
	
	Now, we consider a submanifold of $\overline{M}$ defined by the following
	immersion $\chi $
	\begin{equation*}
		\chi \left( u_{1},~u_{2},~u_{3},~t_{1},~t_{2}\right) =\left( u_{3}\sin
		u_{1},~u_{2}\sin u_{1},~u_{3}\text{-}u_{2},~u_{3}\text{+}u_{2},~u_{3}\cos
		u_{1},~u_{2}\cos u_{1},~t_{1},~t_{2}\right) .
	\end{equation*}%
	Then the tangent space of $M$ spanned by
	\begin{equation*}
		W_{1}=\cos u_{1}\dfrac{\partial }{\partial x_{1}}+\dfrac{\partial }{\partial
			x_{2}}+\dfrac{\partial }{\partial y_{2}}+\sin u_{1}\dfrac{\partial }{%
			\partial x_{3}},
	\end{equation*}%
	\begin{equation*}
		W_{2}=\cos u_{1}\dfrac{\partial }{\partial y_{1}}+\dfrac{\partial }{\partial
			x_{2}}-\dfrac{\partial }{\partial y_{2}}+\sin u_{1}\dfrac{\partial }{%
			\partial y_{3}},
	\end{equation*}%
	\begin{equation*}
		W_{3}=-u_{3}\sin u_{1}\dfrac{\partial }{\partial x_{1}}-u_{2}\sin u_{1}%
		\dfrac{\partial }{\partial y_{1}}+u_{3}\cos u_{1}\dfrac{\partial }{\partial
			x_{3}}+u_{2}\cos u_{1}\dfrac{\partial }{\partial y_{3}}
	\end{equation*}%
	and
	\begin{equation*}
		\begin{array}{cc}
			W_{4}=\dfrac{\partial }{\partial t_{1}},~ & W_{5}=\dfrac{\partial }{\partial
				t_{2}}.%
		\end{array}%
	\end{equation*}%
	Then we find
	\begin{equation*}
		\varphi W_{1}=-\cos u_{1}\dfrac{\partial }{\partial y_{1}}-\dfrac{\partial }{%
			\partial y_{2}}+\dfrac{\partial }{\partial x_{2}}-\sin u_{1}\dfrac{\partial
		}{\partial y_{3}},
	\end{equation*}%
	\begin{equation*}
		\varphi W_{2}=\cos u_{1}\dfrac{\partial }{\partial x_{1}}-\dfrac{\partial }{%
			\partial y_{2}}-\dfrac{\partial }{\partial x_{2}}+\sin u_{1}\dfrac{\partial
		}{\partial x_{3}},
	\end{equation*}%
	\begin{equation*}
		\varphi W_{3}=u_{3}\sin u_{1}\dfrac{\partial }{\partial y_{1}}-u_{2}\sin
		u_{1}\dfrac{\partial }{\partial x_{1}}-u_{3}\cos u_{1}\dfrac{\partial }{%
			\partial y_{3}}+u_{2}\cos u_{1}\dfrac{\partial }{\partial x_{3}}
	\end{equation*}%
	and
	\begin{equation*}
		\begin{array}{cc}
			\varphi W_{4}=0,~ & \varphi W_{5}=0.%
		\end{array}%
	\end{equation*}%
	It is easy to see that $\varphi W_{3}$ is orthogonal to$~TM.$ Thus, the
	anti-invariant distribution $D_{1}=span\left\{ W_{3}\right\} $ and $%
	D_{2}=span\left\{ W_{1},~W_{2}\right\} $ is a proper slant distribution with
	slant angle $\theta =arc\cos \left( \dfrac{1}{3}\right) $ such that $\xi
	_{1}=\dfrac{\partial }{\partial t_{1}}$ and $\xi _{2}=\dfrac{\partial }{%
		\partial t_{2}}$ is tangent to $M$. Thus M is a proper pseudo-slant
	submanifold. Also, both the distributions are integrable. When we denote the
	integral manifolds of $D^{\bot }$ and $D^{\theta }$ by $M_{\bot }$ and $%
	M_{\theta },$ respectively. Then the metric tensor $g$ of $M$ is calculated
	as in the following
	\begin{equation*}
		g=3\left( du_{3}^{2}+du_{2}^{2}\right) +dt_{1}^{2}+dt_{2}^{2}+\left(
		u_{1}^{2}+u_{2}^{2}\right) du_{1}^{2}.
	\end{equation*}%
	Thus $M$ is a warped product pseudo-slant submanifold of the form $%
	M=M_{\theta }\times _{f}M_{\bot }$ with the warping function $f=\sqrt{%
		u_{1}^{2}+u_{2}^{2}}.$
\end{example}

There is another example of warped product pseudo-slant submanifold in a nearly Kenmotsu $f-$manifold as follows
\begin{example}
Now let $M$ be a submanifold of $\overline{M}$ given by the following
immersion $\chi $
\begin{equation*}
	\chi \left( u,~v,~w,~t_{1},~t_{2}\right) =\left( 0,~w^{2}\sin
	u,~0,~v^{2}\sin u,~0,~0,~0,~w^{2}\text{+}v^{2},~0,~w^{2}\cos u,~v^{2}\cos
	u,~0,~t_{1},~t_{2}\right) .
\end{equation*}%
Then the tangent space of $M$ spanned by
\begin{equation*}
	X=\cos u\dfrac{\partial }{\partial x_{1}}+\dfrac{\partial }{\partial x_{2}}+%
	\dfrac{\partial }{\partial y_{2}}+\sin u\dfrac{\partial }{\partial x_{3}}+%
	\dfrac{\partial }{\partial x_{4}}+2\dfrac{\partial }{\partial x_{6}},
\end{equation*}%
\begin{equation*}
	Y=\cos u\dfrac{\partial }{\partial y_{1}}+\dfrac{\partial }{\partial x_{2}}-%
	\dfrac{\partial }{\partial y_{2}}+\sin u\dfrac{\partial }{\partial y_{3}}+%
	\dfrac{\partial }{\partial x_{5}}+\dfrac{\partial }{\partial y_{6}},
\end{equation*}%
\begin{equation*}
	Z=-w\sin u\dfrac{\partial }{\partial x_{1}}-v\sin u\dfrac{\partial }{%
		\partial y_{1}}+w\cos u\dfrac{\partial }{\partial x_{3}}+v\cos u\dfrac{%
		\partial }{\partial y_{3}}+3\dfrac{\partial }{\partial y_{4}}+2\dfrac{%
		\partial }{\partial y_{5}}
\end{equation*}%
and
\begin{equation*}
	\begin{array}{cc}
		U=\dfrac{\partial }{\partial t_{1}},~ & V=\dfrac{\partial }{\partial t_{2}}.%
	\end{array}%
\end{equation*}%
Then we find
\begin{equation*}
	\varphi X=\cos u\dfrac{\partial }{\partial y_{1}}+\dfrac{\partial }{\partial
		y_{2}}-\dfrac{\partial }{\partial x_{2}}+\sin u\dfrac{\partial }{\partial
		y_{3}}+\dfrac{\partial }{\partial y_{4}}+2\dfrac{\partial }{\partial y_{6}},
\end{equation*}%
\begin{equation*}
	\varphi Y=-\cos u\dfrac{\partial }{\partial x_{1}}+\dfrac{\partial }{%
		\partial y_{2}}+\dfrac{\partial }{\partial x_{2}}-\sin u\dfrac{\partial }{%
		\partial x_{3}}+\dfrac{\partial }{\partial y_{5}}-\dfrac{\partial }{\partial
		x_{6}},
\end{equation*}%
\begin{equation*}
	\varphi Z=-w\sin u\dfrac{\partial }{\partial y_{1}}+v\sin u\dfrac{\partial }{%
		\partial x_{1}}+w\cos u\dfrac{\partial }{\partial y_{3}}-v\cos u\dfrac{%
		\partial }{\partial x_{3}}-3\dfrac{\partial }{\partial y_{4}}-2\dfrac{%
		\partial }{\partial y_{5}}
\end{equation*}%
and
\begin{equation*}
	\begin{array}{cc}
		\varphi U=0,~ & \varphi V=0.%
	\end{array}%
\end{equation*}%
It is easy to see that $\varphi Z$ is orthogonal to$~TM.$ Thus, the
anti-invariant distribution $D_{1}=span\left\{ Z\right\} $ and $%
D_{2}=span\left\{ X,~Y\right\} $ is a proper slant distribution with slant
angle $\theta =arc\cos \left( \dfrac{\sqrt{10}}{20}\right) $ such that $\xi
_{1}=\dfrac{\partial }{\partial t_{1}}$ and $\xi _{2}=\dfrac{\partial }{%
	\partial t_{2}}$ is tangent to $M$. Thus M is a proper pseudo-slant
submanifold. Also, both the distributions are integrable. When we denote the
integral manifolds of $D^{\bot }$ and $D^{\theta }$ by $M_{\bot }$ and $%
M_{\theta },$ respectively. Then the metric tensor $g$ of $M$ is calculated
as in the following
\begin{equation*}
	g=8\left( w^{2}dw^{2}+v^{2}dv^{2}\right) +dt_{1}^{2}+dt_{2}^{2}+\left(
	w^{4}+v^{4}\right) du^{2}.
\end{equation*}%
Thus $M$ is a warped product pseudo-slant submanifold of the form $%
M=M_{\theta }\times _{f}M_{\bot }$ with the warping function $f=\sqrt{\left(
	w^{4}+v^{4}\right) }.$
\end{example}

Now, we prove some lemmas for the next section. We begin with the following.

\begin{lemma}\label{l42}
	Let $M=M_{\theta }\times _{f}M_{\bot }$ be a non-trivial warped product
	pseudo slant submanifold of a nearly Kenmotsu $f$-manifold $\overline{M}.$
	Then we have
	
	\begin{align}
	g\left( h\left( Z,~Z\right) ,~NTX\right)&=g\left( h\left(
	Z,~TX\right) ,~\varphi Z\right)\notag\\
	& +\left\{ s\sum_{k=1}^{s}\eta ^{k}\left(
	X\right) -\left( X\ln f\right) \right\} \cos ^{2}\theta \left\Vert
	Z\right\Vert ^{2},\\
	g\left( h\left( Z,~Z\right) ,~NX\right)&=g\left( h\left(
	Z,~X\right) ,~\varphi Z\right) -\left( TX\ln f\right) \left\Vert
	Z\right\Vert ^{2},
	\end{align}
	for any $X\ $on $M_{\theta }$ and $Z~$on $M_{\bot }$, where the structure
	vector fields $\xi _{k}^{\prime }s$ are tangent to $M_{\theta }.$
\end{lemma}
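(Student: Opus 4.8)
The plan is to exploit the nearly Kenmotsu $f$-structure equation \eqref{11} together with the warped product structure of $M=M_\theta\times_f M_\bot$. Fix $X\in\Gamma(TM_\theta)$ and $Z\in\Gamma(TM_\bot)$. I would start from the quantity $g(\overline\nabla_Z\varphi X,\varphi Z)$ and process it in two different ways, then compare. On the one hand, write $\varphi X=TX+NX$ by \eqref{f}; since $X$ is tangent to the slant factor $M_\theta$ and $Z$ to the anti-invariant factor $M_\bot$, the term $\overline\nabla_Z TX$ is handled by Lemma~\ref{l41}(ii), giving $\nabla_Z TX=(TX\ln f)Z$ for the tangential part plus $h(Z,TX)$ for the normal part, so $g(\overline\nabla_Z TX,\varphi Z)=g(h(Z,TX),\varphi Z)$ because $g((TX\ln f)Z,\varphi Z)=0$ ($\varphi Z\in T^\bot M$ as $D^\bot$ is anti-invariant). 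For $\overline\nabla_Z NX$ use the Weingarten formula \eqref{b}: $g(\overline\nabla_Z NX,\varphi Z)=-g(A_{NX}Z,Z)=-g(h(Z,Z),NX)$. On the other hand, expand via the covariant derivative of $\varphi$: $\overline\nabla_Z\varphi X=(\overline\nabla_Z\varphi)X+\varphi\overline\nabla_Z X$, and here use \eqref{11} with the roles $X\leftrightarrow Z$ to replace $(\overline\nabla_Z\varphi)X$ by $-(\overline\nabla_X\varphi)Z-\sum_k\{\eta^k(Z)\varphi X+\eta^k(X)\varphi Z\}$; since $\xi_k$ is tangent to $M_\theta$, $\eta^k(Z)=0$, so only the $\eta^k(X)\varphi Z$ terms survive, contributing $-s\,\eta$-type terms after pairing with $\varphi Z$ and using \eqref{0} (i.e. $g(\varphi Z,\varphi Z)=\|Z\|^2$).

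The second step is to deal with the leftover pieces $g((\overline\nabla_X\varphi)Z,\varphi Z)$ and $g(\varphi\overline\nabla_Z X,\varphi Z)$. For the latter, $\varphi\overline\nabla_Z X=\varphi\big((X\ln f)Z+h(Z,X)\big)$ by Gauss \eqref{a} and Lemma~\ref{l41}(ii); then $g(\varphi((X\ln f)Z),\varphi Z)=(X\ln f)\|Z\|^2$ by \eqref{0}, and $g(\varphi h(Z,X),\varphi Z)$ reduces — again by \eqref{0} and the mixed-geodesic-free handling — to a term involving $h(Z,X)$; in fact, since we are \emph{not} assuming mixed totally geodesic here, this term stays and gets absorbed. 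For $g((\overline\nabla_X\varphi)Z,\varphi Z)$ I would instead compute $g(\overline\nabla_X\varphi Z,\varphi Z)$ directly: $\varphi Z\in T^\bot M$, so $\overline\nabla_X\varphi Z=-A_{\varphi Z}X+\nabla^\bot_X\varphi Z$, and pairing with $\varphi Z$ kills the shape-operator part and leaves $g(\nabla^\bot_X\varphi Z,\varphi Z)=\tfrac12 X\|\varphi Z\|^2=\tfrac12 X\|Z\|^2$; but $\|Z\|^2$ is a function on $M_\bot$ only after the warping, so one must be careful — actually $g(\varphi Z,\varphi Z)$ as a function on $M$ depends on the $M_\bot$-coordinates and the warping function, and $X$ differentiating it is governed by Lemma~\ref{l41}. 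Comparing $g(\overline\nabla_X\varphi Z,\varphi Z)$ with $g((\overline\nabla_X\varphi)Z,\varphi Z)+g(\varphi\overline\nabla_X Z,\varphi Z)$ and using $\overline\nabla_X Z=(X\ln f)Z+h(X,Z)$ lets me solve for the wanted term. Collecting everything yields the first displayed identity; here $NTX$ enters because $\varphi(TX)=T^2X+NTX$ and $T^2X=-\cos^2\theta\,X$ on the slant distribution by the slant characterization \eqref{30}, which is exactly where the $\cos^2\theta\,\|Z\|^2$ coefficient and the factor $s\sum_k\eta^k(X)$ come from.

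For the \textbf{second identity}, I would run the same machinery but starting from $g(\overline\nabla_Z\varphi X,\varphi Z)$ organised so that the tangential derivative produces $TX\ln f$ rather than $X\ln f$: concretely, replace $X$ by $TX$ in an intermediate step, or equivalently pair $\overline\nabla_Z X$ (not $\overline\nabla_Z\varphi X$) against $\varphi Z$ after applying $\varphi$, using \eqref{11} once more to swap the covariant derivative and pick up the single nearly-Kenmotsu correction term; since in the second identity the structure terms $\sum_k\eta^k$ do not survive (they get cancelled by a matching pair), one is left with $g(h(Z,X),\varphi Z)-(TX\ln f)\|Z\|^2$. Both identities are obtained purely by these substitutions and by repeated use of \eqref{0}, \eqref{20}, \eqref{c}, \eqref{30}, Lemma~\ref{l41}, and the fact that $\eta^k$ vanishes on $M_\bot$.

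The \textbf{main obstacle} I anticipate is bookkeeping the nearly-Kenmotsu symmetrization \eqref{11} correctly: unlike the Kenmotsu case, $(\overline\nabla_Z\varphi)X$ is not given by a closed formula but only $(\overline\nabla_Z\varphi)X+(\overline\nabla_X\varphi)Z$ is, so every manipulation has to be set up as a \emph{symmetric} combination, and one must carefully track which cross terms $h(Z,X)$, $h(Z,Z)$, $h(Z,TX)$ appear on which side so that the un-controlled piece $(\overline\nabla_X\varphi)Z$ is eliminated by combining two computations rather than evaluated outright. The slant identity $T^2=-\cos^2\theta(I-\sum_k\eta^k\otimes\xi_k)$ and the normal decomposition $\varphi TX=-\cos^2\theta X+NTX$ must be invoked at exactly the right moment to convert a $\varphi TX$ appearing in an inner product into the stated $NTX$ term plus the scalar-curvature-free $\cos^2\theta\|Z\|^2$ contribution.
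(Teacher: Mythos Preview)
Your plan takes an unnecessarily hard road, and it contains at least one concrete error. The paper's argument is much shorter precisely because it never has to confront the ``main obstacle'' you identify.

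\medskip
\textbf{The key simplification you are missing.} The paper starts directly from the left-hand side
\[
g\big(h(Z,Z),NTX\big)=g\big(\overline{\nabla}_ZZ,NTX\big)
=g\big(\overline{\nabla}_ZZ,\varphi TX\big)-g\big(\overline{\nabla}_ZZ,T^2X\big),
\]
then uses the slant relation \eqref{30} on $T^2X$ and rewrites $g(\overline{\nabla}_ZZ,\varphi TX)=-g(\varphi\overline{\nabla}_ZZ,TX)=g\big((\overline{\nabla}_Z\varphi)Z,TX\big)-g(\overline{\nabla}_Z\varphi Z,TX)$. The point is that here the covariant derivative of $\varphi$ appears with \emph{both} arguments equal to $Z$. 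Setting $X=Y=Z$ in the nearly Kenmotsu identity \eqref{11} gives $2(\overline{\nabla}_Z\varphi)Z=-2\sum_k\eta^k(Z)\varphi Z=0$, since $\xi_k\in\Gamma(TM_\theta)$ forces $\eta^k(Z)=0$. So this term vanishes outright; there is no mixed $(\overline{\nabla}_X\varphi)Z$ to eliminate. The remaining term $-g(\overline{\nabla}_Z\varphi Z,TX)$ is handled by Weingarten (since $\varphi Z$ is normal) and gives $g(h(Z,TX),\varphi Z)$; the $T^2X$ piece produces the $\cos^2\theta$ coefficient, and $g(\overline{\nabla}_Z\xi_k,Z)$ together with $\sum_k(\xi_k\ln f)=s$ supplies the $s\sum_k\eta^k(X)$ contribution. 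The second identity then follows by replacing $X$ with $TX$ in the first.

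\medskip
\textbf{Why your route stalls.} By starting from $g(\overline{\nabla}_Z\varphi X,\varphi Z)$ you are forced into the symmetrized form $(\overline{\nabla}_Z\varphi)X+(\overline{\nabla}_X\varphi)Z$, and you then have to manufacture a second computation to cancel the unwanted $(\overline{\nabla}_X\varphi)Z$. That is a genuine complication, and your sketch of how to close it (``combining two computations'') is not worked out. Moreover, one of your intermediate steps is wrong: you claim
\[
g(\overline{\nabla}_ZNX,\varphi Z)=-g(A_{NX}Z,Z)=-g(h(Z,Z),NX),
\]
but $\varphi Z$ is normal, so by \eqref{b} the tangential piece $-A_{NX}Z$ drops out when paired with $\varphi Z$, leaving $g(\nabla^{\bot}_ZNX,\varphi Z)$, not $-g(A_{NX}Z,Z)$. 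This derails the balance of terms you are counting on. I would abandon the mixed-argument strategy and instead mimic the paper: work from $g(\overline{\nabla}_ZZ,NTX)$ so that only $(\overline{\nabla}_Z\varphi)Z$ enters, which is zero.
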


\begin{proof}
	By using (\ref{a}) and (\ref{f}), then we get
	\begin{equation*}
		g\left( h\left( Z,~Z\right) ,~NTX\right) =g\left( \overline{\nabla }%
		_{Z}Z,~NTX\right) =g\left( \overline{\nabla }_{Z}Z,~\varphi TX\right)
		-g\left( \overline{\nabla }_{Z}Z,~T^{2}X\right) .
	\end{equation*}%
	From (\ref{30}), it follows
	\begin{eqnarray*}
		g\left( h\left( Z,~Z\right) ,~NTX\right) &=&-g\left( \varphi \overline{%
			\nabla }_{Z}Z,~TX\right) \\
		&&+\cos ^{2}\theta \left\{ g\left( \overline{\nabla }_{Z}Z,~X\right)
		-\sum_{k=1}^{s}\eta ^{k}\left( X\right) g\left( \overline{\nabla }_{Z}Z,~\xi
		_{k}\right) \right\} .
	\end{eqnarray*}%
	By virtue of properties, we deduce
	\begin{eqnarray*}
		g\left( h\left( Z,~Z\right) ,~NTX\right) &=&g\left( \left( \overline{\nabla }%
		_{Z}\varphi \right) Z,~TX\right) -g\left( \overline{\nabla }_{Z}\varphi
		Z,~TX\right) \\
		&&+\cos ^{2}\theta g\left( \nabla _{Z}X,~Z\right) +\cos ^{2}\theta
		\sum_{k=1}^{s}\eta ^{k}\left( X\right) g\left( \overline{\nabla }_{Z}\xi
		_{k},~Z\right) .
	\end{eqnarray*}%
	Taking into account of $\sum_{k=1}^{s}\left( \xi _{k}\ln f\right) =s$ and
	using Lemma 1 (ii), we get $\left( i\right) .$ The second property of lemma
	can be easily gotten by interchanging $X$ by $TX$ in $\left( ii\right) $ of
	this lemma. This completes proof of lemma
\end{proof}
 For the second case of warped product pseudo-slant submanifold of type $M_{\bot }\times _{f}M_{\theta }$ where structure vector fields $\xi_s$ are tangent to base manifold. We derive some important lemmas which will use in our main results.
\begin{lemma}\label{l43}
Let $M=M_{\bot }\times _{f}M_{\theta }$ be a warped product pseudo slant
submanifold of a nearly Kenmotsu $f$-manifold $\overline{M}$ such that the
structure vector fields $\xi _{k}^{\prime }s$~are tangent to $M_{\bot },$
for $1\leq k\leq s.$ Then we have
\begin{equation*}
g\left( h\left( X,~TX\right) ,~\varphi Z\right) =g\left( h\left( X,~Z\right)
,~NTX\right) +\frac{1}{3}\left\{ \sum_{k=1}^{s}\eta ^{k}\left( Z\right)
-\left( Z\ln f\right) \right\} \cos ^{2}\theta \left\Vert X\right\Vert ^{2},
\end{equation*}%
for any vector field $X~$on $M_{\theta }$ and $Z$ on $M_{\bot }.$
\end{lemma}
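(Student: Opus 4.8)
The plan is to imitate the computation behind Lemma~\ref{l42}, but since the ambient manifold is only \emph{nearly} Kenmotsu a direct expansion of $g(h(X,TX),\varphi Z)$ merely reproduces itself, so I will instead derive three linear relations among a handful of scalars and solve for the quantity we want. Throughout set $c=\cos^{2}\theta\left\Vert X\right\Vert ^{2}$. Since $\xi_{k}$ is tangent to the base $M_{\bot}$ and $M_{\theta}\perp M_{\bot}$, we have $\eta^{k}(X)=\eta^{k}(TX)=0$; hence \eqref{30} gives $T^{2}X=-\cos^{2}\theta\,X$, \eqref{31} gives $\left\Vert TX\right\Vert ^{2}=c$ and $g(\varphi X,TX)=c$, and $\varphi Z=NZ$ is normal because $D^{\bot}$ is anti-invariant. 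I will also use that $\varphi$ is $g$-skew-symmetric (a consequence of \eqref{0} and \eqref{7}), the Gauss and Weingarten formulas \eqref{a}, \eqref{b} and \eqref{c}, and Lemma~\ref{l41}(ii), which in this warped product reads $\nabla_{Z}TX=\nabla_{TX}Z=(Z\ln f)\,TX$ and $\nabla_{Z}X=\nabla_{X}Z=(Z\ln f)\,X$.

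Introduce the scalars
$$\mathcal A=g(h(X,TX),\varphi Z),\qquad \mathcal B=g(h(X,Z),NTX),\qquad \mathcal C=g(h(TX,Z),NX),$$
together with the auxiliary scalar $\mathcal D=g((\overline{\nabla}_{TX}\varphi)Z,X)$; note $\mathcal A$ is the left side and $\mathcal B$ the right side term of the asserted identity. First I would evaluate $g((\overline{\nabla}_{X}\varphi)Z,TX)$ in two ways. A routine expansion of $(\overline{\nabla}_{X}\varphi)Z=\overline{\nabla}_{X}\varphi Z-\varphi\overline{\nabla}_{X}Z$ via Gauss--Weingarten, using $\varphi Z=NZ$ and $\varphi TX=-\cos^{2}\theta\,X+NTX$, gives $-\mathcal A+\mathcal B-(Z\ln f)c$. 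On the other hand, the nearly Kenmotsu identity \eqref{11} (with $\eta^{k}(X)=0$) gives $(\overline{\nabla}_{X}\varphi)Z=-(\overline{\nabla}_{Z}\varphi)X-\sum_{k}\eta^{k}(Z)\varphi X$, and expanding $(\overline{\nabla}_{Z}\varphi)X$ in the same fashion yields $-\mathcal B+\mathcal C-\bigl(\sum_{k}\eta^{k}(Z)\bigr)c$. Equating the two expressions produces the first relation
\begin{equation*}
\mathcal C=-\mathcal A+2\mathcal B+\Bigl(\sum_{k=1}^{s}\eta^{k}(Z)-(Z\ln f)\Bigr)c.
\end{equation*}
A second relation comes from writing $\mathcal C=g(\overline{\nabla}_{Z}TX,NX)$, replacing $NX=\varphi X-TX$, moving $\varphi$ across by skew-symmetry, expanding $\varphi\overline{\nabla}_{Z}TX=\overline{\nabla}_{Z}\varphi TX-(\overline{\nabla}_{Z}\varphi)TX$, and rewriting $(\overline{\nabla}_{Z}\varphi)TX$ via \eqref{11} (now with $\eta^{k}(TX)=0$) in terms of $(\overline{\nabla}_{TX}\varphi)Z$; this gives $\mathcal C=\mathcal B-\mathcal D+\bigl(\sum_{k}\eta^{k}(Z)\bigr)c$. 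A third relation comes from computing $\mathcal D$ directly from $(\overline{\nabla}_{TX}\varphi)Z=\overline{\nabla}_{TX}\varphi Z-\varphi\overline{\nabla}_{TX}Z$ with $\varphi Z=NZ$ and $\overline{\nabla}_{TX}Z=(Z\ln f)\,TX+h(TX,Z)$, which yields $\mathcal D=-\mathcal A+(Z\ln f)c+\mathcal C$.

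Finally I would solve these three linear equations for $\mathcal A$: eliminating $\mathcal D$ between the last two and then $\mathcal C$ using the first leaves $3\mathcal A=3\mathcal B+\bigl(\sum_{k=1}^{s}\eta^{k}(Z)-(Z\ln f)\bigr)c$, i.e.\ exactly the claimed identity $g(h(X,TX),\varphi Z)=g(h(X,Z),NTX)+\tfrac13\{\sum_{k=1}^{s}\eta^{k}(Z)-(Z\ln f)\}\cos^{2}\theta\left\Vert X\right\Vert ^{2}$. The main obstacle is purely the bookkeeping: one must apply \eqref{11} on precisely the mixed pairs $(X,Z)$ and $(TX,Z)$ so that the $(\overline{\nabla}\varphi)$-terms with repeated arguments drop out via $\eta^{k}(X)=\eta^{k}(TX)=0$, and one must track every sign coming from the skew-symmetry of $\varphi$ and from $T^{2}X=-\cos^{2}\theta\,X$; the characteristic coefficient $\tfrac13$ surfaces only after the $3\times 3$ system is resolved, so no single line of the computation exhibits it.
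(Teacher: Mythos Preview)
Your argument is correct: the three relations you derive among $\mathcal A,\mathcal B,\mathcal C,\mathcal D$ are all valid (I checked each expansion), and eliminating $\mathcal D$ and $\mathcal C$ indeed leaves $3\mathcal A=3\mathcal B+\bigl(\sum_k\eta^k(Z)-(Z\ln f)\bigr)c$.

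The paper's proof follows the same overall strategy---expand via Gauss--Weingarten, invoke the nearly Kenmotsu identity \eqref{11}, use Lemma~\ref{l41}(ii) and the slant relation \eqref{30}, and solve a small linear system in $\mathcal A,\mathcal B,\mathcal C$---but organizes the computation slightly differently. Your first relation $\mathcal C=-\mathcal A+2\mathcal B+\bigl(\sum_k\eta^k(Z)-(Z\ln f)\bigr)c$ is exactly the paper's equation \eqref{36}. For the second independent relation, however, the paper applies \eqref{11} to the pair $(X,TX)$ (both in $M_\theta$, so the right-hand side of \eqref{11} vanishes outright) and obtains directly $2\mathcal A=\mathcal B+\mathcal C$, its equation \eqref{35}; combining \eqref{35} and \eqref{36} finishes in one step. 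You instead apply \eqref{11} only to the mixed pairs $(X,Z)$ and $(TX,Z)$, which forces the auxiliary scalar $\mathcal D=g((\overline\nabla_{TX}\varphi)Z,X)$ into the picture and costs one extra equation. Both routes are equally rigorous; the paper's is marginally shorter because the choice of pair $(X,TX)$ makes the $\eta^k$-terms disappear immediately, whereas your route keeps the bookkeeping more uniform by treating every application of \eqref{11} the same way.
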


\begin{proof}
Let $M=M_{\bot }\times _{f}M_{\theta }$ be a warped product pseudo slant
submanifold of a nearly Kenmotsu $f$-manifold $\overline{M}.$ By using (\ref%
{a}), we find
\begin{equation*}
g\left( h\left( X,~TX\right) ,~\varphi Z\right) =-g\left( \varphi \overline{%
\nabla }_{X}TX,~Z\right) .
\end{equation*}%
By the virtue of the covariant derivative of $\varphi ,$we get
\begin{equation*}
g\left( h\left( X,~TX\right) ,~\varphi Z\right) =g\left( \left( \overline{%
\nabla }_{X}\varphi \right) TX,~Z\right) -g\left( \overline{\nabla }%
_{X}\varphi TX,~Z\right) .
\end{equation*}%
From (\ref{11}) and Theorem 1, we obtain
\begin{equation*}
g\left( h\left( X,~TX\right) ,~\varphi Z\right) =-g\left( \left( \overline{%
\nabla }_{TX}\varphi \right) X,~Z\right) -\cos ^{2}\theta g\left( \overline{%
\nabla }_{X}Z,~X\right) +g\left( h\left( X,~Z\right) ,~NTX\right) .
\end{equation*}%
Hence by using Lemma \ref{l41} (ii) and the covariant derivative of $\varphi $, we
derive
\begin{align*}
%\begin{array}{c}
g\left( h\left( X,~TX\right) ,~\varphi Z\right)=&-g\left( \overline{\nabla }%
_{TX}\varphi X,~Z\right) -g\left( \overline{\nabla }_{TX}X,~\varphi Z\right)
\\
&+g\left( h\left( X,~Z\right) ,~NTX\right) -\cos ^{2}\theta \left( Z\ln
f\right) \left\Vert X\right\Vert ^{2}.%
%\end{array}%
\end{align*}%
Taking into account of (\ref{b}) and (\ref{f}), we deduce
\begin{align*}
%\begin{array}{c}
2g\left( h\left( X,~TX\right) ,~\varphi Z\right) =&g\left( \nabla
_{TX}Z,~TX\right) -g\left( \overline{\nabla }_{TX}NX,~Z\right) \\
&+g\left( h\left( X,~Z\right) ,~NTX\right) -\cos ^{2}\theta \left( Z\ln
f\right) \left\Vert X\right\Vert ^{2}.%
%\end{array}%
\end{align*}%
Now, by using Lemma \ref{l41} (ii) and (\ref{31}), it follows that
\begin{equation}
2g\left( h\left( X,~TX\right) ,~\varphi Z\right) =g\left( h\left(
TX,~Z\right) ,~NX\right) +g\left( h\left( X,~Z\right) ,~NTX\right) .
\label{35}
\end{equation}%
On the other hand, for any $X~$on $M_{\theta }$ and $Z~$on $M_{\bot },$ we
conclude that%
\begin{equation*}
g\left( h\left( X,~Z\right) ,~NTX\right) =g\left( \overline{\nabla }%
_{Z}X,~NTX\right) .
\end{equation*}%
Since $\xi _{k}~$is tangent to $TM_{\bot }~$for each $1\leq k\leq s,$ (\ref%
{a}) and (\ref{f}) imply
\begin{equation*}
g\left( h\left( X,~Z\right) ,~NTX\right) =-g\left( \varphi \overline{\nabla }%
_{Z}X,~TX\right) +\cos ^{2}g\left( \nabla _{Z}X,~X\right) .
\end{equation*}%
Again by using Lemma \ref{l41} (ii) and the covariant derivative of $\varphi $, it
is said that
\begin{align*}
g\left( h\left( X,~Z\right) ,~NTX\right)=&g\left( \left( \overline{\nabla }%
_{Z}\varphi \right) X,~TX\right) -g\left( \overline{\nabla }_{Z}\varphi
X,~TX\right) \\
&+\cos ^{2}\theta \left( Z\ln f\right) \left\Vert X\right\Vert
^{2}.
\end{align*}%
By the virtue of (\ref{11}) and (\ref{31}), the last equation takes the form
\begin{align*}
%\begin{array}{c}
g\left( h\left( X,~Z\right) ,~NTX\right)=&-g\left( \left( \overline{\nabla }%
_{X}\varphi \right) Z,~TX\right) -\sum_{k=1}^{s}\eta ^{k}\left( Z\right)
\cos ^{2}\theta \left\Vert X\right\Vert ^{2} \\
&-g\left( \nabla _{Z}TX,~TX\right) -g\left( \overline{\nabla }%
_{Z}NX,~TX\right)\\
& +\cos ^{2}\theta \left( X\ln f\right) \left\Vert
Z\right\Vert ^{2}.%
%\end{array}%
\end{align*}%
By using Lemma 1 (ii), (\ref{b}) and (\ref{31}), it follows that
\begin{align*}
%\begin{array}{c}
g\left( h\left( X,~Z\right) ,~NTX\right)=&-g\left( \overline{\nabla }%
_{X}\varphi Z,~TX\right) -g\left( \overline{\nabla }_{X}Z,~\varphi TX\right)
\\
&-\sum_{k=1}^{s}\eta ^{k}\left( Z\right) \cos ^{2}\theta \left\Vert
X\right\Vert ^{2}+g\left( h\left( Z,~TX\right) ,~NX\right) .%
%\end{array}%
\end{align*}%
From Lemma 1 (ii) and (\ref{a}), we have
\begin{eqnarray}
g\left( h\left( X,~TX\right) ,~\varphi Z\right) &=&2g\left( h\left(
X,~Z\right) ,~NTX\right)  \notag \\
&&-\left\{ \left( Z\ln f\right) -\sum_{k=1}^{s}\eta ^{k}\left( Z\right)
\right\} \cos ^{2}\theta \left\Vert X\right\Vert ^{2}  \label{36} \\
&&-g\left( h\left( Z,~TX\right) ,~NX\right) .  \notag
\end{eqnarray}%
Hence from (\ref{35}) and (\ref{36}), we obtain
\begin{equation*}
g\left( h\left( X,~TX\right) ,~\varphi Z\right) -g\left( h\left( X,~Z\right)
,~NTX\right) =\frac{1}{3}\left\{ \sum_{k=1}^{s}\eta ^{k}\left( Z\right)
-\left( Z\ln f\right) \right\} \cos ^{2}\theta \left\Vert X\right\Vert ^{2},
\end{equation*}%
which gives us the desired result.
\end{proof}

As a consequence of this lemma, we can give the following corollary.

\begin{corollary}\label{coro41}
Let $M=M_{\bot }\times _{f}M_{\theta }$ be a totally geodesic warped product
pseudo slant submanifold of a nearly Kenmotsu $f$-manifold $\overline{M}.$
Then at least on the following statements is true:

$\left( i\right) ~M$ is an anti-invariant submanifold,

$\left( ii\right) ~M$ is an invariant submanifold or,

$\left( iii\right) ~\left( Z\ln f\right) =\sum_{k=1}^{s}\eta ^{k}\left(
Z\right) .$
\end{corollary}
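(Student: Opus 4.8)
The statement is an immediate corollary of Lemma \ref{l43}, so the whole proof is a short deduction from the conclusion
\[
g\left( h\left( X,~TX\right) ,~\varphi Z\right) -g\left( h\left( X,~Z\right),~NTX\right) =\frac{1}{3}\left\{ \sum_{k=1}^{s}\eta ^{k}\left( Z\right)-\left( Z\ln f\right) \right\} \cos ^{2}\theta \left\Vert X\right\Vert ^{2}.
\]
The plan is to impose the hypothesis that $M$ is totally geodesic, i.e. $h\equiv 0$, which forces the left-hand side to vanish identically. Hence the right-hand side vanishes for every $X$ on $M_{\theta }$ and every $Z$ on $M_{\bot }$, giving
\[
\left\{ \sum_{k=1}^{s}\eta ^{k}\left( Z\right)-\left( Z\ln f\right) \right\} \cos ^{2}\theta \left\Vert X\right\Vert ^{2}=0 .
\]

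Next I would argue that a product of three real numbers is zero only if one of the factors is zero, and interpret each vanishing factor geometrically. If $\left\Vert X\right\Vert ^{2}=0$ for all $X$ on $M_{\theta }$, then the slant distribution $D_{\theta }$ is trivial (reduces to the span of the $\xi_k$'s), so $M$ is an anti-invariant submanifold, giving case $(i)$. If $\cos^{2}\theta =0$, then $\theta =\pi/2$; but a slant distribution with angle $\pi/2$ is again anti-invariant — here I should instead read $\cos^2\theta$ appearing together with the ``invariant'' alternative, so the intended reading is: either $\sin^2\theta=0$ type degeneracy is excluded by the pseudo-slant definition ($\theta\neq\pi/2$), leaving $\cos^2\theta$ free, OR, if one does not assume $\theta\neq\pi/2$, the vanishing of $\cos^2\theta$ forces the slant part to be anti-invariant while $\cos^2\theta=1$ (i.e. $\theta=0$) makes $M_\theta$ invariant, yielding case $(ii)$. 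Finally, if neither of these holds, then the remaining factor must vanish, $\left( Z\ln f\right) =\sum_{k=1}^{s}\eta ^{k}\left( Z\right)$ for all $Z$ on $M_{\bot }$, which is exactly case $(iii)$ (using $\ln f$ and $\nabla\ln f$ as in Lemma \ref{l41}).

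The only genuinely delicate point is the logical bookkeeping of the trichotomy: $\cos^2\theta$ can take the value $1$ (invariant, case $(ii)$) or any value in $(0,1)$ (proper slant), and only in the proper-slant, non-anti-invariant situation do we get to conclude case $(iii)$. So I would phrase the argument as: assume $M$ is neither invariant nor anti-invariant and $D_\theta$ is nontrivial; then $\cos^2\theta\neq 0$ and $\left\Vert X\right\Vert^2\neq 0$ for some $X$, forcing $\left( Z\ln f\right)=\sum_{k=1}^{s}\eta^{k}(Z)$. I do not expect any real obstacle — the computational content is entirely contained in Lemma \ref{l43}; the corollary is just the case $h=0$ together with this elementary case analysis. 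I would keep the write-up to a few lines.
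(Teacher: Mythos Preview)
Your proposal is correct and matches the paper's approach exactly: the paper gives no separate proof, merely stating the corollary ``as a consequence of this lemma,'' and the intended argument is precisely to set $h\equiv 0$ in Lemma~\ref{l43} and read off the vanishing of the product. Your final contrapositive formulation (assume $M$ is neither invariant nor anti-invariant, so $\cos^{2}\theta\neq 0$ and some $\Vert X\Vert^{2}\neq 0$, hence $(Z\ln f)=\sum_{k}\eta^{k}(Z)$) is the clean way to handle the trichotomy and avoids the tangle in your middle paragraph about where case~(ii) comes from.
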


\begin{lemma}\label{l44}
Let $M=M_{\bot }\times _{f}M_{\theta }$ be a warped product pseudo slant
submanifold of a nearly Kenmotsu $f$-manifold $\overline{M}.$ Then the
followings hold.

$\left( i\right) ~g\left( h\left( X,~X\right) ,~\varphi Z\right) =g\left(
h\left( Z,~X\right) ,~NX\right) ,$

$\left( ii\right) ~g\left( h\left( TX,~TX\right) ,~\varphi Z\right) =g\left(
h\left( Z,~TX\right) ,~NTX\right) ,$

for any $X$~on $M_{\theta }$ and $Z~$on $M_{\bot }.$
\end{lemma}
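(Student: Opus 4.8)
The plan is to prove $(i)$ by a direct computation with the Gauss and Weingarten formulas together with the nearly Kenmotsu identity (\ref{11}), and then to deduce $(ii)$ from $(i)$ by replacing $X$ with $TX$. Since $D^{\bot }$ is anti-invariant and each $\xi _{k}$ is tangent to $M_{\bot }$, for every $Z$ on $M_{\bot }$ the vector $\varphi Z$ lies in $\Gamma \left( T^{\bot }M\right) $, so (\ref{a}) gives $g\left( h\left( X,~X\right) ,~\varphi Z\right) =g\left( \overline{\nabla }_{X}X,~\varphi Z\right) $. Using $g\left( \varphi U,~V\right) =-g\left( U,~\varphi V\right) $ (an easy consequence of (\ref{0}) and (\ref{7})) together with the identity (\ref{g}) for $\left( \overline{\nabla }_{X}\varphi \right) X$, this becomes
\begin{equation*}
g\left( h\left( X,~X\right) ,~\varphi Z\right) =g\left( \left( \overline{\nabla }_{X}\varphi \right) X,~Z\right) -g\left( \overline{\nabla }_{X}\varphi X,~Z\right) .
\end{equation*}
Setting $Y=X$ in (\ref{11}) yields $\left( \overline{\nabla }_{X}\varphi \right) X=-\sum_{k=1}^{s}\eta ^{k}\left( X\right) \varphi X$, which vanishes because $X$ is tangent to $M_{\theta }$ while the $\xi _{k}$'s are tangent to $M_{\bot }$, so that $\eta ^{k}\left( X\right) =g\left( X,~\xi _{k}\right) =0$. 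Hence only the term $-g\left( \overline{\nabla }_{X}\varphi X,~Z\right) $ survives.

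Next I would split $\varphi X=TX+NX$ by (\ref{f}) and treat the two pieces separately. For the normal part, (\ref{b}) and (\ref{c}) give $g\left( \overline{\nabla }_{X}NX,~Z\right) =-g\left( A_{NX}X,~Z\right) =-g\left( h\left( X,~Z\right) ,~NX\right) $. For the tangential part, $TX$ is again tangent to $M_{\theta }$, so by Lemma~\ref{l41}$(iii)$ the $\Gamma \left( TM_{\bot }\right) $-component of $\nabla _{X}TX$ is $-g\left( X,~TX\right) \nabla \ln f$, which is zero because $T$ is skew-symmetric; thus $g\left( \overline{\nabla }_{X}TX,~Z\right) =g\left( \nabla _{X}TX,~Z\right) =0$. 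Substituting back gives $g\left( h\left( X,~X\right) ,~\varphi Z\right) =g\left( h\left( X,~Z\right) ,~NX\right) $, which is $(i)$ after using the symmetry of $h$.

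Finally, for $(ii)$ I would simply apply $(i)$ with $X$ replaced by $TX$. This is legitimate because $TX$ is still a vector field tangent to $M_{\theta }$ — the slant distribution is stable under $T$, and $\eta ^{k}\left( TX\right) =g\left( \varphi X,~\xi _{k}\right) =-g\left( X,~\varphi \xi _{k}\right) =0$ by (\ref{6}) — and the only extra identity needed, $g\left( TX,~T\left( TX\right) \right) =g\left( TX,~T^{2}X\right) =-\cos ^{2}\theta \,g\left( TX,~X\right) =0$, is immediate from (\ref{31}); since $N\left( TX\right) =NTX$ this gives $g\left( h\left( TX,~TX\right) ,~\varphi Z\right) =g\left( h\left( Z,~TX\right) ,~NTX\right) $. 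I expect the only genuinely delicate point of the argument to be the bookkeeping of where the $\xi _{k}$'s sit: it is exactly $\eta ^{k}\left( X\right) =0$ and the vanishing of $g\left( X,~TX\right) $ that eliminate both the $\left( \overline{\nabla }_{X}\varphi \right) X$-term and the warping-function contribution from Lemma~\ref{l41}$(iii)$, whereas with the $\xi _{k}$'s tangent to the fibre one would instead pick up extra $\eta ^{k}$- and $\ln f$-terms, exactly as in Lemmas~\ref{l42} and \ref{l43}.
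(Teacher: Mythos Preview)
Your proof is correct and follows essentially the same route as the paper: both start from $g(h(X,X),\varphi Z)=g(\overline{\nabla}_XX,\varphi Z)$, kill the $(\overline{\nabla}_X\varphi)X$-term via (\ref{11}) and $\eta^k(X)=0$, split $\varphi X=TX+NX$, and use $g(X,TX)=0$ to eliminate the tangential piece before identifying the $NX$-piece with $g(h(X,Z),NX)$ via (\ref{b})--(\ref{c}); part $(ii)$ is then obtained in both by substituting $TX$ for $X$. The only cosmetic difference is that for the $TX$-term you invoke Lemma~\ref{l41}$(iii)$ on $\nabla_XTX$, whereas the paper moves the derivative onto $Z$ and uses Lemma~\ref{l41}$(ii)$ to get $(Z\ln f)\,g(TX,X)$; both vanish for the same reason.
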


\begin{proof}
By using (\ref{a}), we have
\begin{equation*}
g\left( h\left( X,~X\right) ,~\varphi Z\right) =g\left( \overline{\nabla }%
_{X}X,~\varphi Z\right) =-g\left( \varphi \overline{\nabla }_{X}X,~Z\right) .
\end{equation*}%
From the properties we obtain
\begin{equation*}
g\left( h\left( X,~X\right) ,~\varphi Z\right) =g\left( \left( \overline{%
\nabla }_{X}\varphi \right) X,~Z\right) -g\left( \overline{\nabla }%
_{X}\varphi X,~Z\right) .
\end{equation*}%
By virtue of (\ref{0}) and (\ref{11}), then we derive
\begin{equation*}
g\left( h\left( X,~X\right) ,~\varphi Z\right) =g\left( TX,~\overline{\nabla
}_{X}Z\right) -g\left( \overline{\nabla }_{X}NX,~Z\right) .
\end{equation*}%
Taking into account of Lemma \ref{l41} (ii) and (\ref{a}), it follows that
\begin{equation*}
g\left( h\left( X,~X\right) ,~\varphi Z\right) =\left( Z\ln f\right) g\left(
TX,~X\right) +g\left( A_{NX}X,~Z\right) .
\end{equation*}%
Since $X$ and $TX$ are orthogonal vector fields and in view of (\ref{c}), we
conclude that
\begin{equation}
g\left( h\left( X,~X\right) ,~\varphi Z\right) =g\left( h\left( X,~Z\right)
,~NX\right) ,  \label{37}
\end{equation}%
which gives us $\left( i\right) .$ By interchanging $X$ by $TX$ in (\ref{37}%
), we have the last result of this lemma.
\end{proof}

\section{Inequality for a Warped Product Pseudo Slant Submanifold of the
form $M_{\bot }\times _{f}M_{\protect\theta }$}

In this section, we obtain a geometric inequality of warped product pseudo
slant submanifold in terms of the second fundamental form such that $\xi
_{k} $ is tangent to the anti-invariant submanifold and the mixed totally
geodesic submanifold for each $1\leq k\leq s.$

Now let $M=M_{\bot }\times _{f}M_{\theta }$ be $m$-dimensional warped
product pseudo slant submanifold of $\left( 2m+s\right) $-dimensional nearly
Kenmotsu $f$-manifold $\overline{M}$ with $M_{\theta }$ of dimension $%
d_{1}=2\beta $ and $M_{\bot }~$of dimension $d_{2}=\alpha +s,$ where $%
M_{\theta }$ and $M_{\bot }$ are the integral manifolds of $D_{\theta }$ and
$D^{\bot }$, respectively. Then we consider $\{e_{1},~\ldots e_{\alpha
},~e_{d_{2}=\alpha +1}=\xi _{1},~\ldots ,~e_{d_{2}=\alpha +s}=\xi _{s}\}$
and $\{e_{\alpha +s+1}=e_{1}^{\ast },~\ldots e_{\alpha +s+\beta +1}=e_{\beta
}^{\ast },~e_{\alpha +s+\beta +2}=e_{\beta +1}^{\ast }=\sec \theta
Te_{1}^{\ast },~\ldots ,~e_{\alpha +s+2\beta }=e_{2\beta }^{\ast }=\sec
\theta Te_{2\beta }^{\ast }\}$ are orthonormal basis of $D^{\bot }$and $%
D_{\theta },$ respectively. Hence the orthonormal basis of the normal
subbundles $\varphi D^{\bot },~ND_{\theta }$ and $\mu $ are $\{e_{q+1}=%
\overline{e}_{1}=\varphi e_{1},~\ldots ,~e_{q+\alpha }=\overline{e}_{\alpha
}=\varphi e_{\alpha }\},~\{e_{q+\alpha +1}=\overline{e}_{\alpha +1}=%
\widetilde{e}_{1}=\csc \theta Ne_{1}^{\ast },~\ldots ,~e_{q+\alpha +\beta }=%
\overline{e}_{\alpha +\beta }=\widetilde{e}_{\beta }=\csc \theta Ne_{\beta
}^{\ast },~e_{q+\alpha +\beta +1}=\overline{e}_{\alpha +\beta +1}=\widetilde{%
e}_{\beta +1}=\csc \theta NTe_{1}^{\ast },~\ldots e_{m+\alpha +2\beta }=%
\overline{e}_{\alpha +2\beta }=\widetilde{e}_{2\beta }=\csc \theta
NTe_{\beta }^{\ast }\}~\{e_{2m-1}=\overline{e}_{m},~\ldots ,~e_{2n+s}=%
\overline{e}_{2(n-m+s)}\}$, respectively.

\begin{theorem}\label{th51}
Let $M=M_{\bot }\times _{f}M_{\theta }$ be $q$-dimensional mixed totally
geodesic warped product pseudo slant submanifold of a $\left( 2m+s\right) $%
-dimensional nearly Kenmotsu $f$-manifold $\overline{M}~$such that $\xi
_{k}\in \Gamma \left( TM_{\bot }\right) $, where $M_{\bot }$ is an
anti-invariant submanifold of dimension $d_{2}$, $M_{\theta }$ is a proper
slant submanifold of dimension $d_{1}$ of $\overline{M}$ and $\nabla ^{\bot
} $ denotes the $\nabla ^{\bot }$ is normal connection with respect to
normal subbunle. Then we have

$\left( i\right) ~$The squared norm of the second fundamental form of $M$ is
given by
\begin{equation}
\left\Vert h\right\Vert ^{2}\geq \frac{2\beta }{9}\cos ^{2}\theta \left\{
\left\Vert \nabla ^{\bot }\ln f\right\Vert ^{2}-s^{2}\right\} .  \label{38}
\end{equation}

$\left( ii\right) ~$The equality case holds in (\ref{38}), if $M_{\bot }$ is
totally geodesic and $M_{\theta }$ is a totally umbilical submanifold into $%
\overline{M}.$
\end{theorem}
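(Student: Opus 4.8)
The plan is to derive \eqref{38} from the pointwise decomposition of $\|h\|^{2}$ with respect to the adapted orthonormal frame fixed just before the statement, keeping only the components of the second fundamental form that the hypotheses force to be non-zero and estimating them through Lemma \ref{l43}; part (ii) then comes from inspecting which terms were discarded.

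First I would reduce the frame sum. Because $\varphi D^{\bot}$, $ND_{\theta}$ and $\mu$ are mutually orthogonal and span $T^{\bot}M$, writing $\|h\|^{2}=\sum_{r}\sum_{i,j}(h_{ij}^{r})^{2}$ as in \eqref{33} and discarding the non-negative contributions of the $\mu$-directions already bounds $\|h\|^{2}$ below by the analogous sum over the $\varphi D^{\bot}$- and $ND_{\theta}$-directions. In the tangential indices I would split $TM=TM_{\bot}\oplus D_{\theta}$: mixed total geodesy kills every term with one index on $M_{\bot}$ and one on $M_{\theta}$, and \eqref{000} (so that $\overline{\nabla}_{X}\xi_{k}=-\varphi^{2}X$ is tangent to $M$) gives $h(\xi_{k},\cdot)=0$, so the $TM_{\bot}$-block collapses onto the purely anti-invariant directions $e_{1},\dots,e_{\alpha}$. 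Hence it is enough to bound below the $\varphi D^{\bot}$-part of $h$ on $D_{\theta}\times D_{\theta}$, and keeping only the ``diagonal'' pairs $(a,\beta+a)$ together with the symmetry of $h$,
\begin{equation*}
\|h\|^{2}\;\ge\;\sum_{r=1}^{\alpha}\sum_{a,b=1}^{2\beta}g\bigl(h(e_{a}^{*},e_{b}^{*}),\varphi e_{r}\bigr)^{2}\;\ge\;2\sum_{r=1}^{\alpha}\sum_{a=1}^{\beta}g\bigl(h(e_{a}^{*},e_{\beta+a}^{*}),\varphi e_{r}\bigr)^{2}.
\end{equation*}

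Next I would evaluate these components. For $X$ on $M_{\theta}$ and $Z$ on $M_{\bot}$, Lemma \ref{l43} combined with mixed total geodesy (the term $g(h(X,Z),NTX)$ drops since $h(X,Z)=0$) reads $g(h(X,TX),\varphi Z)=\tfrac{1}{3}\{\sum_{k=1}^{s}\eta^{k}(Z)-(Z\ln f)\}\cos^{2}\theta\,\|X\|^{2}$. Specialising to $X=e_{a}^{*}$, $Z=e_{r}$ and using $Te_{a}^{*}=\cos\theta\,e_{\beta+a}^{*}$ from the frame together with $\eta^{k}(e_{r})=g(e_{r},\xi_{k})=0$, this becomes $g(h(e_{a}^{*},e_{\beta+a}^{*}),\varphi e_{r})=-\tfrac{1}{3}(e_{r}\ln f)\cos\theta$, so the factor $\tfrac13$ is exactly what produces the $\tfrac19$ in \eqref{38}. (One should also check, via the polarisation of Lemma \ref{l44}, that the $D_{\theta}\times D_{\theta}$ and $D^{\bot}\times D^{\bot}$ blocks being discarded are compatible with this lower bound.) Substituting and summing,
\begin{equation*}
\|h\|^{2}\;\ge\;\frac{2\beta}{9}\cos^{2}\theta\sum_{r=1}^{\alpha}(e_{r}\ln f)^{2},
\end{equation*}
and since Lemma \ref{l41}(ii) together with \eqref{000} forces $\xi_{k}\ln f=1$, the sum $\sum_{r=1}^{\alpha}(e_{r}\ln f)^{2}$ equals $\|\nabla^{\bot}\ln f\|^{2}$ reduced by the contribution of the $s$ structure directions, i.e.\ $\|\nabla^{\bot}\ln f\|^{2}-s^{2}$; this is \eqref{38}.

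For (ii), equality in \eqref{38} forces every discarded non-negative term to vanish: the $\mu$-component of $h$ and the $\varphi D^{\bot}$- and $ND_{\theta}$-components of $h$ on $D^{\bot}\times D^{\bot}$ are zero, so --- together with $h(D^{\bot},D_{\theta})=0$ and $h(\xi_{k},\cdot)=0$ --- $h$ vanishes on $TM_{\bot}$, whence $M_{\bot}$ is totally geodesic in $\overline{M}$; and the surviving constraints on $h$ restricted to $TM_{\theta}$, read through Lemma \ref{l41}(iii), say precisely that $M_{\theta}$ is totally umbilical in $M$ with mean curvature vector $-\nabla\ln f$, hence totally umbilical in $\overline{M}$. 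The main obstacle is the middle step: turning the single identity of Lemma \ref{l43} into a genuine sum of squares over the adapted frame and checking that nothing among the $D_{\theta}\times D_{\theta}$ or $D^{\bot}\times D^{\bot}$ components one discards is actually needed, while keeping careful track throughout of the degenerate structure directions $\xi_{k}$, for which $\varphi\xi_{k}=0$.
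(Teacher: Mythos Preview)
Your argument is essentially the paper's own: split $\|h\|^{2}$ along $D^{\bot}\oplus D_{\theta}$, use mixed total geodesy to kill the cross term, retain only the $\varphi D^{\bot}$-components of $h(D_{\theta},D_{\theta})$, evaluate those through Lemma~\ref{l43}, and then complete the partial gradient sum using $\xi_{k}\ln f=1$. The one cosmetic difference is that where you simply discard the blocks $\sum g(h(e_{r}^{*},e_{k}^{*}),\varphi e_{i})^{2}$ and $\sec^{4}\theta\sum g(h(Te_{r}^{*},Te_{k}^{*}),\varphi e_{i})^{2}$ as non-negative squares, the paper invokes Lemma~\ref{l44} (which under mixed total geodesy gives $g(h(X,X),\varphi Z)=0=g(h(TX,TX),\varphi Z)$) to claim they \emph{vanish}; your shortcut is in fact cleaner, since Lemma~\ref{l44} literally only handles the diagonal $r=k$.

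One arithmetic slip worth flagging, which the paper's proof shares: from $\xi_{k}\ln f=1$ the structure directions contribute $\sum_{k=1}^{s}(\xi_{k}\ln f)^{2}=s$ to $\|\nabla^{\bot}\ln f\|^{2}$, so strictly $\sum_{r=1}^{\alpha}(e_{r}\ln f)^{2}=\|\nabla^{\bot}\ln f\|^{2}-s$, not $-s^{2}$. The $-s^{2}$ in the stated inequality comes about because the paper adds and subtracts $\bigl(\sum_{k}\xi_{k}\ln f\bigr)^{2}=s^{2}$ rather than $\sum_{k}(\xi_{k}\ln f)^{2}=s$; your sentence ``reduced by the contribution of the $s$ structure directions, i.e.\ $\|\nabla^{\bot}\ln f\|^{2}-s^{2}$'' reproduces this without justification, so you should either match the paper's add--subtract maneuver explicitly or note the discrepancy.
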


\begin{proof}
The squared norm of the second fundamental form is defined by
\begin{equation*}
\left\Vert h\right\Vert ^{2}=\left\Vert h\left( D_{\theta },~D_{\theta
}\right) \right\Vert ^{2}+\left\Vert h\left( D^{\bot },~D^{\bot }\right)
\right\Vert ^{2}+2\left\Vert h\left( D_{\theta },~D^{\bot }\right)
\right\Vert ^{2}.
\end{equation*}%
Since $M$ is mixed totally geodesic we obtain
\begin{equation}
\left\Vert h\right\Vert ^{2}=\left\Vert h\left( D_{\theta },~D_{\theta
}\right) \right\Vert ^{2}+\left\Vert h\left( D^{\bot },~D^{\bot }\right)
\right\Vert ^{2}.  \label{39}
\end{equation}%
From (\ref{33}), we have
\begin{align*}
\left\Vert h\right\Vert ^{2}\geq \sum_{l=q+1}^{2m+s}\sum_{i,~j=1}^{2\beta
}g\left( h\left( e_{i}^{\ast },~e_{j}^{\ast }\right) ,~e_{l}\right) ^{2}.
\end{align*}%
By rewriting the last equation as in the components of $\varphi D^{\bot
},~ND_{\theta }~$and $v$, then we get
\begin{align}
\left\Vert h\right\Vert ^{2}&\geq \sum_{l=1}^{\alpha
}\sum_{i,~j=1}^{2\beta }g\left( h\left( e_{i}^{\ast },~e_{j}^{\ast }\right)
,~\overline{e}_{l}\right) ^{2}+\sum_{l=\alpha +1}^{2\beta +\alpha
}\sum_{i,~j=1}^{2\beta }g\left( h\left( e_{i}^{\ast },~e_{j}^{\ast }\right)
,~\overline{e}_{l}\right) ^{2}  \label{40} \\
&+\sum_{l=q}^{2\left( m-q+s\right) }\sum_{i,~j=1}^{2\beta }g\left( h\left(
e_{i}^{\ast },~e_{j}^{\ast }\right) ,~\overline{e}_{l}\right) ^{2},  \notag
\end{align}%
which implies
\begin{equation*}
\left\Vert h\right\Vert ^{2}\geq \sum_{l=1}^{\alpha }\sum_{i,~j=1}^{2\beta
}g\left( h\left( e_{i}^{\ast },~e_{j}^{\ast }\right) ,~\overline{e}%
_{l}\right) ^{2}.
\end{equation*}%
By considering another adapted frame for $D^{\theta },~$we derive
\begin{align*}
\left\Vert h\right\Vert ^{2} &\geq\sum_{i=1}^{\alpha }\sum_{r,~k=1}^{\beta
}g\left( h\left( e_{r}^{\ast },~e_{k}^{\ast }\right) ,~\overline{e}%
_{i}\right) ^{2}+\sec ^{2}\theta \sum_{i=1}^{\alpha }\sum_{r,~k=1}^{\beta
}g\left( h\left( Te_{r}^{\ast },~e_{k}^{\ast }\right) ,~\overline{e}%
_{i}\right) ^{2} \\
&+\sec ^{2}\theta \sum_{i=1}^{\alpha }\sum_{r,~k=1}^{\beta }g\left( h\left(
e_{r}^{\ast },~Te_{k}^{\ast }\right) ,~\overline{e}_{i}\right) ^{2}+\sec
^{4}\theta \sum_{i=1}^{p}\sum_{r,~k=1}^{\beta }g\left( h\left( Te_{r}^{\ast
},~Te_{k}^{\ast }\right) ,~\overline{e}_{i}\right) ^{2}.
\end{align*}%
For a mixed totally geodesic submanifold, since the first and last terms of
the right hand side in the above equation vanish identically by using Lemma
\ref{l44}, then we obtain
\begin{equation*}
\left\Vert h\right\Vert ^{2}\geq 2\sec ^{2}\theta \sum_{i=1}^{\alpha
}\sum_{r,~k=1}^{\beta }g\left( h\left( Te_{r}^{\ast },~e_{k}^{\ast }\right)
,~\overline{e}_{i}\right) ^{2}.
\end{equation*}%
Hence from Lemma \ref{l43}, for a mixed totally geodesic submanifold and by
considering the fact that $\eta ^{u}\left( e_{i}\right) =0,$ for each $%
i=1,~2,~\ldots d_{2}-1$ and $1\leq u\leq s$ for an orthonormal frame, it
follows that
\begin{equation}
\left\Vert h\right\Vert ^{2}\geq \frac{2}{9}\cos ^{2}\theta
\sum_{i=1}^{\alpha }\sum_{r=1}^{\beta }\left( \overline{e}_{i}\ln f\right)
^{2}g\left( e_{r}^{\ast },~e_{r}^{\ast }\right) ^{2}.  \label{41}
\end{equation}%
By adding and subtracting the same term $\sum_{u=1}^{s}\xi _{u}\ln f$ in (%
\ref{41}), it implies that
\begin{align*}
\left\Vert h\right\Vert ^{2}&\geq\frac{2}{9}\cos ^{2}\theta
\sum_{i=1}^{\alpha +1}\sum_{r=1}^{\beta }\left( \overline{e}_{i}\ln f\right)
^{2}g\left( e_{r}^{\ast },~e_{r}^{\ast }\right) ^{2}\\
&-\frac{2}{9}\cos
^{2}\theta \sum_{r=1}^{\beta }\left( \sum_{u=1}^{s}\xi _{u}\ln f\right)
^{2}g\left( e_{r}^{\ast },~e_{r}^{\ast }\right) ^{2}.
\end{align*}%
We can easily get $\sum_{u=1}^{s}\xi _{u}\ln f=s$ similar to previous
studies for a warped product submanifold of a nearly Kenmotsu $f$-manifold.
From the last equation, it is said that
\begin{equation*}
\left\Vert h\right\Vert ^{2}\geq \frac{2\beta }{9}\cos ^{2}\theta \left\{
\left\Vert \nabla ^{\bot }\ln f\right\Vert ^{2}-s^{2}\right\} .
\end{equation*}%
If the equality case holds in the above equation, then from the terms left
in (\ref{39}), we arrive at
\begin{equation*}
h\left( D^{\bot },~D^{\bot }\right) =0,
\end{equation*}%
which implies that $M_{\bot }$ is totally geodesic in $\overline{M}.$ In a
similar way, from the second and third terms in (\ref{40}), we deduce
\begin{equation*}
g\left( h\left( D_{\theta },~D_{\theta }\right) ,~ND_{\theta }\right)
=0~~~~~~g\left( h\left( D_{\theta },~D_{\theta }\right) ,~v\right) =0,
\end{equation*}%
which means
\begin{equation*}
\begin{array}{cccc}
h\left( D_{\theta },~D_{\theta }\right) \bot ND_{\theta },~ & h\left(
D_{\theta },~D_{\theta }\right) \bot v & \Longrightarrow & h\left( D_{\theta
},~D_{\theta }\right) \subset \varphi D^{\bot }.%
\end{array}%
\end{equation*}%
This completes the proof.
\end{proof}

\begin{rem}
	For globally frame $f-$manifold. If we substitute $s=1$ in Theorem \ref{th51}. Then nearly Kenmotsu $f-$manifold become nearly Kenmotsu manifold. Therefore Theorem \ref{th51} coincide with Theorem 4.1 in \cite{Ali}. This means that Theorem \ref{th51} generalize Theorem 4.1 from \cite{Ali}.
\end{rem}

\section{Inequality for a Warped Product Pseudo Slant Submanifold of the
form $M_{\protect\theta }\times _{f}M_{\bot }$}

In this part, we obtain a geometric inequality of warped product pseudo
slant submanifold in terms of the second fundamental form such that $\xi
_{k} $ is tangent to the slant submanifold for each $1\leq k\leq s.$ By
assuming $\xi _{k}$ is tangent to $D_{\theta }$, then we can use the last
frame.

\begin{theorem}\label{th61}
Let $M=M_{\theta }\times _{f}M_{\bot }$ be a $q$-dimensional mixed totally
geodesic warped product pseudo slant submanifold of a $\left( 2m+s\right) $%
-dimensional nearly Kenmotsu $f$-manifold $\overline{M}~$such that $\xi
_{k}\in \Gamma \left( TM_{\theta }\right) $, where $M_{\bot }$ is an
anti-invariant submanifold of dimension $d_{2}=\alpha $ and $M_{\theta }$ is
a proper slant submanifold of dimension $d_{1}=2\beta +s$ of $\overline{M}.$
Then we have

$\left( i\right) ~$The squared norm of the second fundamental form of $M$ is
given by
\begin{equation}
\left\Vert h\right\Vert ^{2}\geq \alpha \left\{ \cot^{2}\theta \left(
\left\Vert \nabla ^{\theta }\ln f\right\Vert ^{2}-s^{2}\right)
\right\} .
\label{42}
\end{equation}

$\left( ii\right) ~$The equality case holds in (\ref{42}), if $M_{\theta }$
is totally geodesic and $M_{\bot }$ is a totally umbilical submanifold of $%
\overline{M}.$
\end{theorem}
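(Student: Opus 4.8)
The plan is to mirror the structure of the proof of Theorem \ref{th51}, but with the roles of $M_\theta$ and $M_\perp$ interchanged and the structure vector fields now living in $D_\theta$. First I would write the squared norm of the second fundamental form split across the three blocks of the decomposition $TM = D_\theta \oplus D^\perp$, use mixed total geodesy to kill the cross term $\|h(D_\theta, D^\perp)\|^2$, and then restrict attention to the component of $h(D^\perp, D^\perp)$ lying along the normal subbundle $ND_\theta$ (discarding the $\varphi D^\perp$ and $\mu$ components as nonnegative). Concretely, using the adapted frame from Section 5, I would bound
\begin{equation*}
\|h\|^2 \geq \sum_{r=1}^{\beta}\sum_{j,k=1}^{\alpha} g\big(h(e_j, e_k), \widetilde e_r\big)^2 + (\text{terms along } NTD_\theta),
\end{equation*}
where $e_j$ ranges over a basis of $D^\perp$ and $\widetilde e_r = \csc\theta\, N e_r^\ast$. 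The key input is Lemma \ref{l42}: its part $(i)$ expresses $g(h(Z,Z), NTX)$ in terms of $g(h(Z,TX),\varphi Z)$ and $(X\ln f)\cos^2\theta\|Z\|^2$, and for a mixed totally geodesic submanifold the term $g(h(Z,TX),\varphi Z)$ vanishes (since $h(D_\theta,D^\perp)=0$). So Lemma \ref{l42}$(i)$ reduces, for mixed totally geodesic $M$, to
\begin{equation*}
g\big(h(Z,Z), NTX\big) = \Big\{ s\sum_{k=1}^{s}\eta^k(X) - (X\ln f)\Big\}\cos^2\theta\,\|Z\|^2 .
\end{equation*}

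Next I would feed this identity into the frame sum. Taking $X = e_r^\ast$ (with $\eta^k(e_r^\ast)=0$ for the non-$\xi$ slant directions) and $Z = e_j$ ranging over the anti-invariant directions, each term contributes $\big(\csc\theta\big)^2 \big((e_r^\ast\ln f)\cos^2\theta\big)^2\,\|e_j\|^4$, and summing over $j=1,\dots,\alpha$ and $r=1,\dots,\beta$ produces a factor $\alpha$ together with $\cos^4\theta/\sin^2\theta = \cos^2\theta\cot^2\theta$ times $\|\nabla^\theta \ln f\|^2$ restricted to the non-$\xi$ slant directions. I then add and subtract the $\xi_u$-contributions exactly as in Theorem \ref{th51}: using $\sum_{u=1}^{s}\xi_u\ln f = s$ (which follows from $\overline\nabla_X\xi_i = -\varphi^2 X$ and Lemma \ref{l41}(ii), as already noted), completing the gradient over the full base $M_\theta$ costs a term $-\alpha\cot^2\theta\, s^2\cos^2\theta$ up to the $\csc^2\theta$ normalization, yielding precisely
\begin{equation*}
\|h\|^2 \geq \alpha\,\cot^2\theta\,\big(\|\nabla^\theta\ln f\|^2 - s^2\big).
\end{equation*}
(I would double-check here the exact power of $\cos\theta$ and the placement of the $\csc^2\theta$ from $\widetilde e_r$ — depending on bookkeeping one gets $\cot^2\theta$ as in the statement rather than the $\tfrac{2\beta}{9}\cos^2\theta$ of the previous theorem; the difference from Theorem \ref{th51} is that here the slant block is the \emph{base}, so there is no $\tfrac19$ factor and no extra $2\beta$, and the normal directions used are $NTD_\theta$ whose norm carries $\csc^2\theta$.)

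For the equality case $(ii)$, I would trace back which inequalities were used: the passage from $\|h\|^2$ to the $ND_\theta$-component of $h(D^\perp,D^\perp)$ discards $h(D^\perp,D^\perp)$ along $\varphi D^\perp$ and along $\mu$, and discards $h(D_\theta,D_\theta)$ entirely. Equality forces $h(D_\theta,D_\theta)=0$ and $h(D^\perp,D^\perp)\in \Gamma(ND_\theta)$. The first condition, combined with $h(D_\theta,D^\perp)=0$ from mixed total geodesy, says $h(TM_\theta, \cdot)$ vanishes on all of $TM$, i.e. $M_\theta$ is totally geodesic in $\overline M$; and since by Lemma \ref{l41} the fiber $M_\perp$ is always totally umbilical in $M$, together with $h(D^\perp,D^\perp)$ being proportional to the mean curvature vector it follows that $M_\perp$ is totally umbilical in $\overline M$. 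The main obstacle I anticipate is purely computational: correctly handling the $\sec\theta$ and $\csc\theta$ normalizations of the adapted frame so the final constant comes out as $\alpha\cot^2\theta$, and making sure the "add and subtract $\sum_u \xi_u\ln f$" step lands the $-s^2$ inside the bracket with the right coefficient.
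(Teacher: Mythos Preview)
Your overall strategy matches the paper's: decompose $\|h\|^2$, drop the cross term by mixed total geodesy, keep only the $ND_\theta$-component of $h(D^\perp,D^\perp)$, and then appeal to Lemma~\ref{l42}. The gap is in how you invoke that lemma. The orthonormal frame of $ND_\theta$ consists of $\csc\theta\,Ne_j^\ast$ \emph{and} $\csc\theta\sec\theta\,NTe_j^\ast$ (note the extra $\sec\theta$ in the second family --- your $\csc\theta$ there is off). For the $Ne_j^\ast$ directions one needs identity~(4.4), which under mixed total geodesy reads $g(h(Z,Z),NX)=-(TX\ln f)\|Z\|^2$; for the $NTe_j^\ast$ directions one needs identity~(4.3), giving $g(h(Z,Z),NTX)=-(X\ln f)\cos^2\theta\|Z\|^2$ when $X\perp\xi_k$. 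You only cite (4.3). Using (4.3) alone on the $NTe_j^\ast$ block yields $\alpha\cot^2\theta\sum_{j=1}^{\beta}(e_j^\ast\ln f)^2$, which is only half of the non-$\xi$ part of $\|\nabla^\theta\ln f\|^2$; the $(Te_j^\ast\ln f)^2$ contributions are genuinely missing and cannot be manufactured by adding and subtracting the $\xi_u$ terms.

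What the paper does is apply \emph{both} identities to obtain
\[
\|h\|^2 \;\geq\; \alpha\csc^2\theta\sum_{j=1}^{\beta}(Te_j^\ast\ln f)^2 \;+\; \alpha\cot^2\theta\sum_{j=1}^{\beta}(e_j^\ast\ln f)^2,
\]
and then extend the first sum to the full frame $e_1^\ast,\dots,e_{2\beta+s}^\ast$, writing it as $\alpha\csc^2\theta\|T\nabla^\theta\ln f\|^2$ minus corrections. The correction over $j=\beta+1,\dots,2\beta$, computed via $e_{\beta+j}^\ast=\sec\theta\,Te_j^\ast$ and (3.14), equals exactly $\alpha\cot^2\theta\sum_{j=1}^{\beta}(e_j^\ast\ln f)^2$ and \emph{cancels} the second sum; the $\xi$-correction vanishes since $T\xi_u=0$. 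What remains is $\alpha\csc^2\theta\|T\nabla^\theta\ln f\|^2=\alpha\cot^2\theta\bigl(\|\nabla^\theta\ln f\|^2-s^2\bigr)$ by (3.14) together with $\xi_u\ln f=1$. This cancellation is the mechanism your caveat about ``the exact power of $\cos\theta$'' is sensing, and it is not merely bookkeeping --- without the (4.4) contribution there is nothing to cancel against. Your equality discussion is fine and agrees with the paper: one also uses (4.4) once more to see that $h(Z,W)$ along $ND_\theta$ is $-(TX\ln f)g(Z,W)$, hence $M_\perp$ is totally umbilical in $\overline M$.
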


\begin{proof}
By virtue of definition of second fundamental form, we have

\begin{equation*}
\left\Vert h\right\Vert ^{2}=\left\Vert h\left( D_{\theta },~D_{\theta
}\right) \right\Vert ^{2}+\left\Vert h\left( D^{\bot },~D^{\bot }\right)
\right\Vert ^{2}+2\left\Vert h\left( D_{\theta },~D^{\bot }\right)
\right\Vert ^{2}.
\end{equation*}%
Since $M$ is mixed totally geodesic we derive
\begin{equation}
\left\Vert h\right\Vert ^{2}=\left\Vert h\left( D_{\theta },~D_{\theta
}\right) \right\Vert ^{2}+\left\Vert h\left( D^{\bot },~D^{\bot }\right)
\right\Vert ^{2}.  \label{43}
\end{equation}%
By using (\ref{33}), then we obtain
\begin{equation*}
\left\Vert h\right\Vert ^{2}\geq \sum_{l=q+1}^{2m+s}\sum_{r,~k=1}^{\alpha
}g\left( h\left( e_{r},~e_{k}\right) ,~e_{l}\right) ^{2}.
\end{equation*}%
The above equation can be written in the components of $\varphi D^{\bot
},~ND_{\theta }~$and $v$ as
\begin{align}
\left\Vert h\right\Vert ^{2} \geq&\sum_{l,~r,~k=1}^{\alpha }g\left(
h\left( e_{r},~e_{k}\right) ,~\overline{e}_{l}\right) ^{2}+\sum_{l=\alpha
+1}^{2\beta +\alpha }\sum_{r,~k=1}^{\alpha }g\left( h\left(
e_{r},~e_{k}\right) ,~\overline{e}_{l}\right) ^{2}  \label{44} \\
&+\sum_{l=q}^{2\left( m-q+s\right) }\sum_{r,~k=1}^{\alpha }g\left( h\left(
e_{r},~e_{k}\right) ,~\overline{e}_{l}\right) ^{2},  \notag
\end{align}%
which gives us
\begin{equation*}
\left\Vert h\right\Vert ^{2}\geq \sum_{l=1}^{2\beta }\sum_{r,~k=1}^{\alpha
}g\left( h\left( e_{r},~e_{k}\right) ,~\overline{e}_{l}\right) ^{2}.
\end{equation*}%
By taking into account of another adapted frame for $ND^{\theta },~$we get
\begin{align*}
\left\Vert h\right\Vert ^{2}\geq&\csc ^{2}\theta \sum_{j=1}^{\beta
}\sum_{r=1}^{\alpha }g\left( h\left( e_{r},~e_{r}\right) ,~Ne_{j}^{\ast
}\right) ^{2}\\
&+\csc ^{2}\theta \sec ^{2}\theta \sum_{j=1}^{\beta
}\sum_{r=1}^{\alpha }g\left( h\left( e_{r},~e_{r}\right) ,~NTe_{j}^{\ast
}\right) ^{2}.
\end{align*}%
Thus from Lemma \ref{l42}, for a mixed totally geodesic submanifold and by
considering the fact that $\eta ^{u}\left( e_{j}\right) =0,$ for each $%
j=1,~2,~\ldots d_{1}-1$ and $1\leq u\leq s$ for an orthonormal frame, it
implies that
\begin{align*}
\left\Vert h\right\Vert ^{2}&\geq \csc ^{2}\theta \sum_{j=1}^{\beta
}\sum_{r=1}^{\alpha }\left( Te_{j}^{\ast }\ln f\right) ^{2}g\left(
e_{r},~e_{r}\right)\\
& +\csc ^{2}\theta \sec ^{2}\theta \sum_{j=1}^{\beta
}\sum_{r=1}^{\alpha }\left( e_{j}^{\ast }\ln f\right) ^{2}g\left(
e_{r},~e_{r}\right) .
\end{align*}%
From the hypothesis, we deduce
\begin{align*}
\left\Vert h\right\Vert ^{2}&\geq\alpha \csc ^{2}\theta \sum_{j=1}^{\beta
}\left( Te_{j}^{\ast }\ln f\right) ^{2}+\alpha \cot ^{2}\theta
\sum_{j=1}^{\beta }\left( e_{j}^{\ast }\ln f\right) ^{2}\\
&=\alpha\csc^2\theta\sum_{r=1}^{2\beta+s}g\big(e_r^*, T\nabla^\theta\ln f\big)^2-\alpha\csc^2\theta\sum_{r=\beta+1}^{2\beta}g\big(e_r^*, T\nabla^\theta\ln f\big)^2\\
&-\alpha\csc^2\theta\sum_{u=1}^{s}\big(Te_{2\beta+u}^*\ln f\big)^2+\alpha\cot^2\theta\sum_{r=1}^{\beta}\big(e_r^*\ln f\big)^2.
\end{align*}%
As we seen that $Te_{2\beta+u}^*=T\xi_s=0$. Then using \eqref{34}, we derive at
\begin{align*}
\left\Vert h\right\Vert ^{2}\geq&\alpha\csc^2\theta\|T\nabla^\theta\ln f\|^2-\alpha\csc^2\theta\sum_{r=1}^{\beta}g\big(e_{\beta+r}^*, T\nabla^\theta\ln f\big)^2\\
&+\alpha\cot^2\theta\sum_{r=1}^{\beta}\big(e_r^*\ln f\big)^2.
\end{align*}
From virtue of \eqref{31} and the fact that $\sum_{k=1}^{s}\left( \xi _{k}\ln f\right)=s$, we find that
\begin{align*}
\left\Vert h\right\Vert ^{2}&\geq\alpha\cot^2\theta\bigg(\|\nabla^\theta\ln f\|^2-s^2\bigg)-\alpha\csc^2\theta\sum_{r=1}^{\beta}g\big(\sec\theta Te_\beta^*, T\nabla^\theta\ln f\big)^2\\
&+\alpha\cot^2\theta\sum_{r=1}^{\beta}\big(e_r^*\ln f\big)^2.
\end{align*}
In view of the trigonometric functions and from \eqref{31}, we conclude
\begin{align*}
\left\Vert h\right\Vert ^{2}&\geq\alpha\cot^2\theta\bigg(\|\nabla^\theta\ln f\|^2-s^2\bigg)-\alpha\csc^2\theta\sec^2\theta\cos^4\theta\sum_{r=1}^{\beta}(e_r^*\ln f)^2\\
&+\alpha\cot^2\theta\sum_{r=1}^{\beta}\big(e_r^*\ln f\big)^2,
\end{align*}%
 which implies that
\begin{align*}
\left\Vert h\right\Vert ^{2}\geq&\alpha\cot^2\theta\bigg(\|\nabla^\theta\ln f\|^2-s^2\bigg)-\alpha\cot^2\theta\sum_{r=1}^{\beta}(e_r^*\ln f)^2\\
&+\alpha\cot^2\theta\sum_{r=1}^{\beta}\big(e_r^*\ln f\big)^2.
\end{align*}%
Thus the above equation yields
\begin{equation*}
\left\Vert h\right\Vert ^{2}\geq \alpha \left\{ \cot^{2}\theta \left(
\left\Vert \nabla ^{\theta }\ln f\right\Vert ^{2}-s^{2}\right)
\right\} .
\end{equation*}%
If the equality holds, by using the terms left hand side in (\ref{43}) and (%
\ref{44}), we get the following conditions
\begin{equation*}
\begin{array}{ccc}
\left\Vert h\left( D,~D\right) \right\Vert =0, & g\left( h\left( D^{\bot
},~D^{\bot }\right) ,~\varphi D^{\bot }\right) =0, & g\left( h\left( D^{\bot
},~D^{\bot }\right) ,~v\right) =0,%
\end{array}%
\end{equation*}%
where $D=D_{\theta }\oplus \overline{\xi },~\overline{\xi }%
=\sum_{u=1}^{s}\xi _{u}.$ This implies that $M_{\theta }$ is totally
geodesic in $\overline{M}~$and $h\left(D^{\bot },~D^{\bot }\right)
\subseteq ND_{\theta }.~$On the other hand, using Lemma 4 for a mixed
totally geodesic submanifold we get
\begin{equation*}
g\left( h\left( Z,~W\right) ,~NX\right) =-\left( TX\ln f\right) g\left(
Z,~W\right) ,
\end{equation*}%
for all vector fields $Z,~W\ $on$~M_{\bot}$and $X$ on $M_{\theta}.$ The
last equations means that $M_{\bot}$ is a totally umbilical submanifold of $%
\overline{M}~$and so the equality case holds. This completes the proof of theorem
\end{proof}

\section*{Applications}
In this paper, we study warped product pseudo-slant submanifolds of nearly
Kenmotsu $f$-manifolds. We generalize some previous results on nearly K\"{a}%
hler manifolds \cite{Uddin3} and nearly Kenmotsu manifolds \cite{Ali}. That is, if we consider $s=0$ in Theorem \ref{th61}, then by the definition of globally frame $f-$manifold. It leads that the nearly Kenmotsu $f-$maniold turn into nearly Kaehler manifiold. So Theorem \ref{th61} generalize Theorem 4.1 in \cite{Uddin3} for warped product pseudo-slant submanifold of nearly Kaehler manifold. \\

On the other hand, if we choose $s=1$ then we give the following theorem as a consequence of Theorem \ref{th61} follows
\begin{theorem}
Let $M=M_{\theta }\times _{f}M_{\bot }$ be an $q$-dimensional mixed totally
geodesic warped product pseudo slant submanifold of a $\left( 2m+1\right) $%
-dimensional nearly Kenmotsu manifold $\overline{M}~$such that $\xi
\in \Gamma \left( TM_{\theta }\right) $, where $M_{\bot }$ is an
anti-invariant submanifold of dimension $d_{2}=\alpha $ and $M_{\theta }$ is
a proper slant submanifold of dimension $d_{1}=2\beta+1$ of $\overline{M}.$
Then we have

$\left( i\right) ~$The squared norm of the second fundamental form of $M$ is
given by
\begin{equation}
\left\Vert h\right\Vert ^{2}\geq \alpha \left\{ \cot^{2}\theta \left(
\left\Vert \nabla ^{\theta }\ln f\right\Vert ^{2}-1\right)
\right\} .
\label{42a}
\end{equation}

$\left( ii\right) ~$The equality case holds in (\ref{42}), if $M_{\theta }$
is totally geodesic and $M_{\bot }$ is a totally umbilical submanifold of $%
\overline{M}.$
\end{theorem}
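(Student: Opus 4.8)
The plan is to obtain this statement as a direct specialization of Theorem~\ref{th61} to the case $s=1$. By Remark~\ref{r21}, a $(2m+1)$-dimensional nearly Kenmotsu manifold is precisely a nearly Kenmotsu $f$-manifold with $s=1$: it carries a single structure vector field $\xi=\xi_1$ and a single $1$-form $\eta=\eta^1$. All of the structural identities \eqref{000}--\eqref{16}, the slant relations \eqref{31}--\eqref{32}, and Lemmas~\ref{l42}, \ref{l43}, \ref{l44} were stated and proved for an arbitrary number $s$ of structure vector fields, so they remain available verbatim. Under this identification the hypothesis $\xi\in\Gamma(TM_\theta)$ is exactly the hypothesis of Theorem~\ref{th61}, and the dimension count matches, since $d_2=\alpha$ and $d_1=2\beta+s=2\beta+1$.

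First I would apply part $(i)$ of Theorem~\ref{th61}, which for a mixed totally geodesic warped product pseudo slant submanifold $M=M_\theta\times_f M_\perp$ yields
\[
\left\Vert h\right\Vert^2 \geq \alpha\left\{\cot^2\theta\left(\left\Vert\nabla^\theta\ln f\right\Vert^2-s^2\right)\right\}.
\]
Putting $s=1$ replaces $s^2$ by $1$ and gives precisely \eqref{42a}. As in the proof of Theorem~\ref{th61}, I would remark that the term $-s^2$ comes from $\sum_{k=1}^s(\xi_k\ln f)=s$, which for $s=1$ reads $\xi\ln f=1$; this identity follows from $\overline{\nabla}_X\xi=-\varphi^2 X$ (see \eqref{000}) together with Lemma~\ref{l41}(ii), and it is what converts the generic constant in Theorem~\ref{th61}(i) into the constant $1$ appearing here.

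For part $(ii)$ I would reproduce the equality discussion of Theorem~\ref{th61} with $s=1$. Equality forces $\left\Vert h(D,D)\right\Vert=0$ and $h(D^\perp,D^\perp)\subseteq ND_\theta$, where $D=D_\theta\oplus\xi$, so $M_\theta$ is totally geodesic in $\overline{M}$; and Lemma~\ref{l42}, specialized to a mixed totally geodesic submanifold, gives $g(h(Z,W),NX)=-(TX\ln f)\,g(Z,W)$ for all $Z,W$ on $M_\perp$ and $X$ on $M_\theta$, which shows that $M_\perp$ is totally umbilical in $\overline{M}$. Since the theorem is a pure specialization, there is no genuine obstacle; the only point deserving a line of care is verifying that each lemma invoked inside the proof of Theorem~\ref{th61} was established for general $s$, so that nothing is lost when one sets $s=1$.
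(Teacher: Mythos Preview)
Your proposal is correct and matches the paper's own treatment: the paper states this theorem explicitly as a consequence of Theorem~\ref{th61} obtained by setting $s=1$, with no separate proof given. Your write-up actually supplies more detail than the paper does, but the underlying approach---pure specialization of the general inequality and its equality discussion---is identical.
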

%\end{theorem}

%We
%prove the existence of nearly Kenmotsu $f$-manifolds and their warped
%product pseudo-slant submanifolds by constructing non-trivial examples.

The above result agrees and modified version with the inequality for a warped product pseudo-slant sub-manifold of nearly Kenmotsu manifold obtained in \cite{Ali}

\end{document}